\title{Waiter-Client and Client-Waiter planarity, colorability and minor games}
\author{
Dan Hefetz
\thanks{School of Mathematics, University of Birmingham, Edgbaston, Birmingham B15 2TT, United
Kingdom. Email: d.hefetz@bham.ac.uk. Research supported by EPSRC grant EP/K033379/1.} 
\and Michael
Krivelevich \thanks{School of Mathematical Sciences, Raymond and
Beverly Sackler Faculty of Exact Sciences, Tel Aviv University,
6997801, Israel. Email: krivelev@post.tau.ac.il. Research supported in
part by USA-Israel BSF Grant 2010115 and by grant 912/12 from the
Israel Science Foundation.}
\and Wei En Tan
\thanks{School of Mathematics, University of Birmingham, Edgbaston, Birmingham B15 2TT, United
Kingdom. Email: WET916@bham.ac.uk.} 
}
\newtheorem{theorem}{Theorem} [section]
\newtheorem{proposition}[theorem]{Proposition}
\newtheorem{claim}[theorem]{Claim}
\newtheorem{corollary}[theorem]{Corollary}
\begin{document}

\maketitle

\begin{abstract}
For a finite set $X$, a family of sets ${\mathcal F} \subseteq 2^X$ and a positive integer $q$, we consider two types of two player, perfect information games with no chance moves. In each round of the $(1 : q)$ Waiter-Client game $(X, {\mathcal F})$, the first player, called Waiter, offers the second player, called Client, $q+1$ elements of the board $X$ which have not been offered previously. Client then chooses one of these elements which he claims and the remaining $q$ elements are claimed by Waiter. Waiter wins this game if by the time every element of $X$ has been claimed by some player, Client has claimed all elements of some $A \in {\mathcal F}$; otherwise Client is the winner. Client-Waiter games are defined analogously, the main difference being that Client wins the game if he manages to claim all elements of some $A \in {\mathcal F}$ and Waiter wins otherwise. In this paper we study the Waiter-Client and Client-Waiter versions of the non-planarity, $K_t$-minor and non-$k$-colorability games. For each such game, we give a fairly precise estimate of the unique integer $q$ at which the outcome of the game changes from Client's win to Waiter's win. We also discuss the relation between our results, random graphs, and the corresponding Maker-Breaker and Avoider-Enforcer games.  
\end{abstract}

\section{Introduction}\label{intro}

The theory of positional games on graphs and hypergraphs goes back to the seminal papers of Hales and Jewett~\cite{HJ}, of Lehman~\cite{Lehman} and of Erd\H{o}s and Selfridge~\cite{ES}. It has since become a highly developed area of combinatorics (see, e.g., the monograph of Beck~\cite{TTT} and the recent monograph~\cite{HKSS}). The most popular and widely studied positional games are the so-called Maker-Breaker games. Let $p$ and $q$ be positive integers, let $X$ be a finite set and let ${\mathcal F}$ be a family of subsets of $X$. In each round of the biased $(p : q)$ Maker-Breaker game $(X, {\mathcal F})$, Maker claims $p$ previously unclaimed elements of $X$ and then Breaker responds by claiming $q$ previously unclaimed elements of $X$. Maker wins this game if, by the time every element of $X$ has been claimed, he has claimed all elements of some set $A \in {\mathcal F}$; otherwise Breaker is the winner. The set $X$ is called the \emph{board} of the game, the elements of ${\mathcal F}$ are called the \emph{winning sets} and the integers $p$ and $q$ are Maker's bias and Breaker's bias, respectively. Since this is a finite, perfect information game with no chance moves and no possibility of a draw, one of the two players must have a winning strategy. Moreover, it is not hard to see that Maker-Breaker games are \emph{bias monotone}, that is, claiming more board elements than his bias specifies per round cannot harm that player. In particular, there exists a unique positive integer $b_{\mathcal F}$ such that Breaker has a winning strategy for the $(1 : q)$ game $(X, {\mathcal F})$ if and only if $q \geq b_{\mathcal F}$; we refer to this integer as the \emph{threshold bias} of the Maker-Breaker game $(X, {\mathcal F})$.  

The so-called Avoider-Enforcer games form another class of well-studied positional games. In such games, Enforcer aims to force Avoider to claim all elements of some set $A \in {\mathcal F}$. Avoider-Enforcer games are sometimes referred to as mis\`ere Maker-Breaker games. There are two different sets of rules for Avoider-Enforcer games: \emph{strict rules} under which the number of board elements a player claims per round is precisely his bias and \emph{monotone rules} under which the number of board elements a player claims per round is at least as large as his bias (for more information on Avoider-Enforcer games see, for example,~\cite{HKSae, HKSSae, HKSS}).

One major motivation for studying biased Maker-Breaker and Avoider-Enforcer games is their relation to the theory of random graphs via the so-called \emph{probabilistic intuition}. Consider, for example, a $(1 : q)$ Maker-Breaker game $(X, {\mathcal F})$, where $X = E(K_n)$ is the edge-set of the complete graph on $n$ vertices. The following heuristic argument, first employed by Chv\'atal and Erd\H{o}s in~\cite{CE}, can be used to predict the winner of this game. This heuristic suggests that the player who has a higher chance to win the game when both players are playing randomly is also the one who wins the game when both players are playing optimally. More precisely, if the random graph $G(n,m)$ with $m = \left\lceil \binom{n}{2}/(q+1) \right\rceil$ edges contains all edges of some $A \in {\mathcal F}$ with probability tending to $1$ as $n$ tends to infinity, then Maker has a winning strategy for $(E(K_n), {\mathcal F})$. If, on the other hand, this probability tends to $0$ as $n$ tends to infinity, then $(E(K_n), {\mathcal F})$ is Breaker's win. This is highly unexpected as, in any positional game, both players have deterministic optimal strategies. Moreover, in most natural games, playing randomly against an optimal opponent leads to very poor results. As noted above, this is just a heuristic and does not always predict the outcome of the game correctly. Nevertheless, the probabilistic intuition is remarkably useful. Natural examples where this heuristic predicts the winner correctly, include the connectivity game (that is, ${\mathcal F}$ consists of all connected subgraphs of $K_n$)~\cite{GSz} and the Hamiltonicity game (that is, ${\mathcal F}$ consists of all Hamiltonian subgraphs of $K_n$)~\cite{kriv}. On the other hand, it was proved in~\cite{BL} that the probabilistic intuition fails (though another probabilistic reasoning is in play here) for the $H$-game (that is, ${\mathcal F}$ consists of all copies of some fixed predetermined graph $H$ in $K_n$). The probabilistic intuition is also useful when analyzing biased Avoider-Enforcer games (especially under monotone rules). Some examples can be found in~\cite{HKSSpcm} and~\cite{HKSSae}.
  
In this paper, we study \emph{Waiter-Client} and \emph{Client-Waiter} positional games. Such games are closely related to Maker-Breaker and Avoider-Enforcer games; the main difference being the process of selecting board elements. In every round of the biased $(p : q)$ Waiter-Client game $(X, {\mathcal F})$, the first player, called Waiter, offers the second player, called Client, $p+q$ previously unclaimed elements of $X$. Client then chooses $p$ of these elements which he claims, and the remaining $q$ elements are claimed by Waiter. If, in the final round of the game, only $1 \leq t < p+q$ unclaimed elements remain, then Client chooses $\max \{0, t - q\}$ elements which he claims and the remaining $\min \{t, q\}$ elements are claimed by Waiter. The game ends as soon as all elements of $X$ have been claimed. Waiter wins this game if he manages to force Client to claim all elements of some $A \in {\mathcal F}$; otherwise Client is the winner. Client-Waiter games are defined analogously, the main difference being that Client wins if and only if he manages to claim all elements of some $A \in {\mathcal F}$ (otherwise Waiter is the winner). Additionally, there are two technical differences between Client-Waiter and Waiter-Client games. Firstly, in a Client-Waiter game, Waiter is allowed to offer less board elements per round than his bias specifies. More precisely, in every round of a $(p : q)$ Client-Waiter game, Waiter offers $t$ elements, where $p \leq t \leq p+q$. Client chooses $p$ of these, which he keeps, and the remaining $t - p$ elements are claimed by Waiter. Secondly, if there are $r < p+q$ free elements offered to Client in the final round of the game, he first claims $\min\{r,p\}$ of these and any remaining elements are claimed by Waiter. As with Maker-Breaker games, it is not hard to see that Waiter-Client games are monotone in Waiter's bias $q$. In particular, essentially any Waiter-Client game $(X, {\mathcal F})$ has a threshold bias, that is, a unique positive integer $b_{\mathcal F}$ such that Client has a winning strategy for the $(1 : q)$ Waiter-Client game $(X, {\mathcal F})$ if and only if $q \geq b_{\mathcal F}$. From the way that Client-Waiter games have been defined, it is obvious that they also have a threshold bias (as observed in~\cite{Bednarska}, this does not remain true if we require Waiter to offer exactly $p+q$ board elements per round). The threshold bias of the Client-Waiter game $(X, {\mathcal F})$ is the unique positive integer $b_{\mathcal F}$ such that Waiter has a winning strategy for the $(1 : q)$ game if and only if $q \geq b_{\mathcal F}$.     

Waiter-Client and Client-Waiter games were first defined and studied by Beck under the names Picker-Chooser and Chooser-Picker, respectively (see, e.g.,~\cite{becksec}). However, since \emph{picking} and \emph{choosing} are essentially the same, we feel that the names Waiter and Client, which first appeared in~\cite{BHL}, help the reader to distinguish more easily between the roles of the two players.

As with Maker-Breaker and Avoider-Enforcer games, the probabilistic intuition turns out to be useful for Waiter-Client games as well. In particular, it is known to hold for the $K_t$-game (and in fact, for many other fixed graph games)~\cite{BHL}, for the diameter two game (that is, the winning sets are the edge-sets of all subgraphs of $K_n$ with diameter at most two)~\cite{ctcs} and for the giant component game (that is, the game on $E(K_n)$ in which Waiter tries to force Client to build a connected component on as many vertices as possible)~\cite{BHKL}.

This paper is devoted to the study of several natural $(1 : q)$ Waiter-Client and Client-Waiter games, played on $E(K_n)$. For both the Waiter-Client and Client-Waiter versions, we will study the non-planarity game $(E(K_n), \mathcal{NP})$, where $\mathcal{NP}$ consists of all non-planar subgraphs of $K_n$, the $K_t$-minor game $(E(K_n), \mathcal{M}_t)$, where $\mathcal{M}_t$ consists of all subgraphs of $K_n$ that admit a $K_t$-minor, and the non-$k$-colorability game $(E(K_n), \mathcal{NC}_k)$, where $\mathcal{NC}_k$ consists of all non-$k$-colorable subgraphs of $K_n$. The analogous Maker-Breaker and Avoider-Enforcer games were studied in~\cite{HKSSpcm}.

It was proved in~\cite{BPcycles} that, if $q \geq n/2$, then when playing a $(1 : q)$ Maker-Breaker game on $E(K_n)$, Breaker can force Maker to build a forest, that is, Breaker has a winning strategy for the $(1 : q)$ game $\mathcal{M}_t$, for every $t \geq 3$. On the other hand, it was proved in~\cite{HKSSpcm} that, for every fixed $\varepsilon > 0$, there exists a constant $c = c(\varepsilon) > 0$ such that, if $q \leq (1/2 - \varepsilon) n$, then Maker has a winning strategy for the $(1 : q)$ Maker-Breaker game $\mathcal{M}_t$ for every $t \leq c \sqrt{n/\log n}$. The strict Avoider-Enforcer minor game was considered in~\cite{HKSSpcm} as well. It was proved there that, if $q \leq (1/2 - \varepsilon) n$, then Enforcer has a winning strategy for the $(1 : q)$ game $\mathcal{M}_t$ for every $t \leq n^{\gamma}$, where $\gamma$ is a function of $\varepsilon$. On the other hand, improving a result from~\cite{HKSSpcm}, it was recently proved in~\cite{CEPT} that, if $q \geq 200 n \log n$, then Avoider has a winning strategy for the $(1 : q)$ game $\mathcal{M}_t$, for every $t \geq 4$.     

As with Maker-Breaker games, we are able to determine the asymptotic value of the threshold bias of the Waiter-Client minor game (it is twice as large as the corresponding threshold of Maker-Breaker games). Moreover, in contrast to the aforementioned results for Maker-Breaker games, the accuracy of our results increases as the order of the minor Waiter aims to force in Client's graph decreases. Additionally, we can prove that, even when playing with a bias which is arbitrarily close to $n$, Waiter can force Client to build a $K_t$-minor for $t = \Omega(\sqrt{n})$. This order of magnitude is best possible as, at the end of the game, Client's graph will contain $O(n)$ edges.      

\begin{theorem} \label{th::WCminor}
Let $n$ be a sufficiently large integer, let $\varepsilon = \varepsilon(n) \geq 4 n^{- 1/4}$ and let $t=t(n) \leq \varepsilon^2 \sqrt{n}/5$ be an integer. If $q \leq (1 - \varepsilon) n$, then when playing a $(1 : q)$ Waiter-Client game on $E(K_n)$, Waiter has a strategy to force Client to build a graph which admits a $K_t$-minor. On the other hand, if $q \geq n + \eta$, where $\eta = \eta(n) \geq n^{2/3} \log n$, then when playing a $(1 : q)$ Waiter-Client game on $E(K_n)$, Client can ensure that his graph will be $K_4$-minor free throughout the game.  
\end{theorem}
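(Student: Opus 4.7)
The plan for the first part is to have Waiter execute a two-stage strategy that builds an explicit $K_t$-minor model inside Client's graph. Fix a set of ``branch'' vertices $U = \{u_1,\dots,u_t\} \subseteq V(K_n)$ and, disjoint from $U$, a pool $W \subseteq V(K_n)\setminus U$. In \emph{stage~1} Waiter forces Client to acquire pairwise disjoint stars $S_i \subseteq W$ of size $\ge t$ centred at each $u_i$; in \emph{stage~2} Waiter forces a Client edge between $S_i$ and $S_j$ for every pair $1\le i<j\le t$. Having accomplished this, the branch sets $V_i := \{u_i\}\cup S_i$ are connected through $u_i$ and enjoy a cross-edge to every other $V_j$, witnessing a $K_t$-minor. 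To implement each stage I would use a simulation argument reducing to a Maker-Breaker star-building and cross-edge-building game played with halved bias, appealing to the Maker-Breaker strategy of \cite{HKSSpcm} at bias $(1/2-\varepsilon)n$; a round of the $(1:q)$ Waiter-Client game enables Waiter to enact one Maker move together with $q$ Breaker moves, which is what powers the bias doubling. The numerical constraint $t\le\varepsilon^2\sqrt{n}/5$ is what is required for the $\binom{t}{2}$ cross-edges of stage~2 to fit into Client's edge budget.

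For the second part, Client's plan is to maintain, throughout the entire game, the invariant that his graph $G_C$ is a \emph{pseudoforest}: every connected component of $G_C$ contains at most one cycle. Since any subdivision of $K_4$ has cyclomatic number $3$ and $K_4$ is cubic (so $K_4$-minor-freeness coincides with topological-$K_4$-freeness), a pseudoforest is automatically $K_4$-minor-free. The invariant is comfortably consistent with the edge count: Client claims at most $\lfloor\binom{n}{2}/(q+1)\rfloor \le (n-\eta)/2+O(1)$ edges, which is well below the pseudoforest bound of $n$. An edge $e$ is \emph{dangerous} for Client if $G_C+e$ fails to be a pseudoforest --- this happens exactly when $e$ lies inside an already unicyclic component or joins two such components. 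If $u(G_C)$ denotes the total number of vertices in unicyclic components, the number of dangerous edges is at most $\binom{u(G_C)}{2}$, so Client is safe in every round so long as $u(G_C)$ stays well below $\sqrt{q+1}\approx\sqrt{n}$. Client's strategy is to minimise, at every round, a potential $\Phi$ tracking the distribution of component sizes together with $u(G_C)$, selecting from the $q+1$ offered edges the one that least increases $\Phi$; an averaging argument then shows that $\Phi$ cannot grow fast enough for Waiter to assemble $q+1$ dangerous edges in any single round.

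The main obstacle will be calibrating this potential in the second part so that the quantitative gain per round matches the precise lower bound $\eta\ge n^{2/3}\log n$. This exponent is not accidental: it is exactly the threshold of the critical window of $G(n,m)$, since for $m = n/2 - s$ with $s\gg n^{2/3}$ the random graph is a.a.s.\ a pseudoforest, and the probabilistic intuition predicts that the threshold bias of the game is $n+\Theta(n^{2/3})$. I expect the analytic heart of the argument to parallel the standard second-moment calculation for the subcritical phase of the random graph, guaranteeing that Client's averaging strategy keeps the ``complex excess'' of $G_C$ below $O(n^{2/3})$ throughout; verifying rigorously that the averaging bound delivers this quantitatively is the most delicate step, and is the place where the $\log n$ factor in $\eta\ge n^{2/3}\log n$ is absorbed.
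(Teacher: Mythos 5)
For the first half, there are two problems. First, the star sizes do not support Stage~2: to force a Client edge in $E_{K_n}(S_i, S_j)$, Waiter must be able to offer $q+1$ free edges between $S_i$ and $S_j$, which requires $|S_i|\,|S_j| \geq q+1 \approx n$ and hence $|S_i| = \Omega(\sqrt{n})$, not $|S_i| \geq t$ (the latter can be far smaller). The paper's version of this step makes precisely this calculation: its branch pieces $D_i$ each have size at least $\lfloor\sqrt{n}\rfloor$. Second, and more seriously, the proposed ``simulation'' of a Maker--Breaker strategy at halved bias is unjustified and, as stated, does not work. Waiter cannot dictate which of the $q+1$ offered edges Client will claim --- he can only restrict Client to that set --- so one cannot ``enact one Maker move'' in Client's graph by choosing it. To push such a simulation through one would need a Maker strategy that, at every turn, has at least $q+1$ equally good moves available, a strong robustness property that the result of~\cite{HKSSpcm} neither states nor provides. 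Moreover the Maker--Breaker bound from~\cite{HKSSpcm} only reaches $t = O(\sqrt{n/\log n})$, short of the $t\le\varepsilon^2\sqrt{n}/5$ claimed here. The paper sidesteps all of this with a direct three-stage Waiter strategy (force a long Client path, then a large Client matching from a disjoint vertex set into the path, then one Client cross-edge per pair of branch pieces), and its cross-edge stage is feasible precisely because the branch pieces have size $\Omega(\sqrt n)$.

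For the second half, your structural observation is sound --- a pseudoforest is $K_4$-minor-free, and is in fact a stronger invariant than what the paper maintains (the paper only needs that no two cycles of Client's graph share a vertex). But the proposal contains no actual argument: the potential $\Phi$ is never defined, the per-round averaging inequality is never stated, and you explicitly flag the quantitative step that absorbs $\eta\geq n^{2/3}\log n$ as the part you have not carried out. That quantitative step is the entire content of this direction of the theorem. The paper dispatches it with one clean application of the Beck-type potential criterion (Theorem~\ref{th::ClientBES}) to ${\mathcal F} = {\mathcal F}_1\cup{\mathcal F}_2$, where ${\mathcal F}_1$ is the family of cycles of length at least $\sqrt[3]{n}/2$ and ${\mathcal F}_2$ is the family of pairs of short cycles whose intersection is a path; the exponent $2/3$ in $\eta$ emerges from balancing these two potential sums, not from a bespoke potential-tracking strategy. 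As written, your second-half proposal is an outline of a plausible plan but not a proof.
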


Theorem~\ref{th::WCminor} exhibits a very strong probabilistic intuition; stronger than the aforementioned corresponding results for Maker-Breaker and Avoider-Enforcer games in several respects. Indeed, it is well known (see, e.g.,~\cite{Bol, JLR}) that if $p \leq (1 - \varepsilon)/n$ for an arbitrarily small but fixed $\varepsilon > 0$, then asymptotically almost surely (or a.a.s. for brevity) every connected component of the random graph $G(n,p)$\footnote{Previously, when first introducing the notion of probabilistic intuition, we used the uniform random graph model $G(n,m)$ which clearly highlights that Client's graph and the random graph it is compared to, have the same number of edges. However, for convenience, we will work with the much more commonly used binomial random graph model $G(n,p)$ in the remainder of this paper. It is known that, for our purposes, both models are equivalent (see, e.g.,~\cite{JLR}).} contains at most one cycle and thus $G(n,p)$ is $K_4$-minor free. On the other hand, it was proved in~\cite{FKO} that if $p \geq (1 + \varepsilon)/n$ for an arbitrarily small but fixed $\varepsilon > 0$, then a.a.s. the random graph $G(n,p)$ admits a $K_t$-minor for $t = \Theta(\sqrt{n})$ (see also~\cite{BCE, KS} for earlier results).

Theorem~\ref{th::WCminor} shows that, for every $4 \leq t = O(\sqrt{n})$, the threshold bias of the Waiter-Client $K_t$-minor game is $(1 + o(1)) n$. Since the threshold bias is a unique integer, it is natural to wonder whether it is precisely $n$. The following result shows that, at least for large $t$, this is not the case. 

\begin{theorem} \label{th::WCminor2}
Let $n, t$ and $\alpha$ be positive integers where $n$ is sufficiently large, $t = t(n) \geq \frac{C \log \log n}{\log \log \log n}$ for some sufficiently large constant $C$, $0 \leq \alpha < c t \log t$ for some sufficiently small constant $c > 0$ and $\alpha = o(n)$. Then Client has a winning strategy for the $(1 : q)$ Waiter-Client game $(E(K_n), \mathcal{M}_t)$ for every $q \geq n - \alpha$.
\end{theorem}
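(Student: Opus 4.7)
The proof strategy is to maintain a small circuit rank in Client's graph. Recall that the circuit rank $r(G) := |E(G)| - |V(G)| + c(G)$, where $c(G)$ denotes the number of connected components of $G$, is a minor-monotone graph invariant: if $H$ is a minor of $G$ then $r(H) \leq r(G)$. Since $r(K_t) = \binom{t-1}{2}$, any graph $G$ with $r(G) < \binom{t-1}{2}$ is $K_t$-minor-free. Thus it suffices to ensure that Client's final graph $G_C$ satisfies $r(G_C) < \binom{t-1}{2}$. The game lasts at most $\lceil \binom{n}{2}/(q+1) \rceil$ rounds, so Client claims at most $m \leq (n+\alpha)/2 + o(1)$ edges during the entire game---this will be the key resource constraint.

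Client's strategy is greedy: at each round, among Waiter's $q+1$ offered edges, Client picks a \emph{safe} edge, namely one whose endpoints lie in distinct components of his current graph (so picking it does not create a cycle); if no safe edge is on offer, the round is \emph{forced} and Client picks any edge, which necessarily creates a cycle and increments the circuit rank by $1$. Among safe edges, Client prefers one that minimizes the product $s_1 s_2$ of the sizes of the two components being merged, so as to control the growth of the potential $\Phi := \sum_j \binom{s_j}{2}$, where the $s_j$ denote the current sizes of Client's components. Writing $R$ for the total number of forced rounds, we have $r(G_C) = R$, so it suffices to prove $R < \binom{t-1}{2}$.

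To bound $R$, let $\Phi_T$ denote the final value of $\Phi$ and $w$ denote the number of within-component edges of $G_C$ claimed by Waiter. In every forced round all $q+1$ offered edges are within-component of Client's graph at that moment (and remain within-component of $G_C$ under later component merges), so Waiter claims at least $q$ such edges; hence $w \geq qR$. On the other hand, every edge of $K_n$ is claimed at the end of the game, so the total number of within-component edges of $G_C$ equals $\Phi_T = m + w$. Combining these, $R \leq (\Phi_T - m)/q$, so it suffices to prove $\Phi_T = O(n\alpha)$.

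The crux of the proof is a structural claim: under Client's greedy strategy, no component of his graph ever has size exceeding $O(\alpha)$. The intuition is that to force Client to extend a fixed component $C$ of size $s$ by one new vertex---rather than let him defect to some cheaper merge elsewhere with smaller product $s_1 s_2$---Waiter essentially has to offer $q+1$ previously unclaimed edges incident to $C$; but the pool of such edges has size $s(n-s)$ minus previous claims, and its size changes by at most $(n-s-1)-(q+1) = \alpha - s - 2$ per round, so after $O(\alpha)$ extension rounds the pool drops below $q+1$ and Waiter cannot maintain the push. Granted the bound $\max_j s_j = O(\alpha)$, we get $\Phi_T \leq (\max_j s_j)\cdot n/2 = O(n\alpha)$, and hence $R = O(\alpha) = O(t \log t)$, which is strictly less than $\binom{t-1}{2}$ as soon as $t$ exceeds a certain constant depending only on $c$ (the condition $t \geq C\log\log n/\log\log\log n$ ensures this for $n$ sufficiently large). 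The hard part will be making the bound $\max_j s_j = O(\alpha)$ rigorous against arbitrary Waiter strategies, which may alternate growing several components, inject within-component offers to inflate $w$, or try to force cross-component merges, so that the simple single-component edge-depletion accounting above must be refined into a more global invariant.
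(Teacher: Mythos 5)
Your proposal takes a genuinely different route from the paper, but it has a serious gap. The paper's proof is a short potential-function calculation: any graph with a $K_t$-minor contains at least $\frac{(t-1)!}{2} \geq e^{c_2 t\log t}$ distinct cycles, so taking $\mathcal{F}$ to be the family of edge-sets of \emph{all} cycles of $K_n$ and invoking Theorem~\ref{th::ClientBES} shows that Client can avoid fully claiming more than $\Phi(\mathcal{F}) \leq e^{(1+o(1))\alpha}(\log n + 1)$ cycles. The hypotheses on $t$ and $\alpha$ are exactly what is needed to push this quantity below $e^{c_2 t\log t}$: the condition $\alpha < ct\log t$ absorbs the $e^{(1+o(1))\alpha}$ factor, and $t \geq C\log\log n/\log\log\log n$ absorbs the $\log n$ factor.

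Your invariant --- circuit rank, with $r(K_t) = \binom{t-1}{2}$ and minor-monotonicity --- and your accounting ($r(G_C)$ equals the number $R$ of forced rounds, $w \geq qR$, $\Phi_T = m + w$) are all correct. But the whole argument rests on the unproved structural claim that, under your greedy strategy, no Client component ever exceeds size $O(\alpha)$, and this is not a loose end: it is the heart of the matter. The informal pool-depletion heuristic does not withstand adversarial play. Waiter is under no obligation to offer $q+1$ edges incident to the component $C$ he wishes to grow; he can offer a \emph{single} edge from $C$ to a singleton (which minimizes $s_1 s_2$ among safe options and will therefore be chosen by your greedy Client) together with $q$ edges that are internal to existing components or that join two already large components, thereby extending $C$ round after round without drawing $q+1$ edges from the pool incident to $C$ and without your depletion inequality ever engaging. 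More tellingly, if your structural claim held, then for $\alpha = O(1)$ (e.g.\ $q = n$) you would conclude $R = O(1)$, so Client keeps his circuit rank bounded by an absolute constant and in particular avoids any $K_4$-minor. That is far stronger than Theorem~\ref{th::WCminor2} (which needs $t \gtrsim \log\log n/\log\log\log n$) and would essentially settle the open question from~\cite{BHKL} recalled in Section~\ref{sec::openprob}. The fact that the hypothesis $t \geq C\log\log n/\log\log\log n$ plays no role in your argument, while it is essential in the paper's proof, is a strong signal that the structural claim is too strong as stated and cannot be established by the local bookkeeping you sketch.
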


For the Client-Waiter minor game, we can again determine the asymptotic value of the threshold bias --- this time it coincides with the threshold bias of the corresponding Maker-Breaker game. 

\begin{theorem} \label{th::CWminor}
Let $n$ be a sufficiently large integer and let $\varepsilon = \varepsilon(n) > 0$. If $q \geq n/2 - 1$, then when playing a $(1 : q)$ Client-Waiter game on $E(K_n)$, Waiter has a strategy to keep Client's graph $K_3$-minor free throughout the game. On the other hand, if $q \leq (1/2 - \varepsilon) n$, then when playing a $(1 : q)$ Client-Waiter game on $E(K_n)$, Client has a strategy to build a graph which admits a $K_t$-minor for $t = (\varepsilon n)^{c \varepsilon}$, where $c > 0$ is an absolute constant.
\end{theorem}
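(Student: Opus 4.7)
I would give an explicit Waiter strategy keeping Client's edge set $F_C$ a forest after every round (hence $K_3$-minor free). Call a free edge \emph{dangerous} if both its endpoints lie in the same connected component of $F_C$. Waiter's strategy in each round is to offer $q+1$ non-dangerous (cross-component) free edges, so that whichever edge Client picks merges two distinct components and preserves the forest invariant. The content of the proof is showing that at least $q+1$ non-dangerous free edges always exist. To this end I would track the potential $\Phi(F_C) = \sum_j \binom{n_j}{2}$, where the $n_j$ are the component sizes of $F_C$. A direct count gives that, after $i$ Client moves, the number of non-dangerous free edges equals $\binom{n}{2} - \Phi - qi + |W_d|$, where $|W_d|$ is the number of Waiter's edges currently lying inside a single component of $F_C$; Waiter's cross-component edges from earlier rounds get ``absorbed'' into components as Client merges them, so $|W_d|$ grows alongside $\Phi$. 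Using a targeted offering rule (say, concentrating each round's offers on edges incident to the current largest component of $F_C$) together with the extremal estimate $\Phi \leq \binom{i+1}{2}$ (attained when $F_C$ is a single tree on $i+1$ vertices), an inductive bookkeeping shows the required inequality $\binom{n}{2} - \Phi - qi + |W_d| \geq q+1$ holds for every $i \leq n-2$, matching the threshold $q \geq n/2 - 1$.

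\textbf{Part 2 (Client, $q \leq (1/2-\varepsilon)n$).} Here Client is guaranteed at least $\binom{n}{2}/(q+1) \geq (1+2\varepsilon)n$ claimed edges, slightly more than a spanning tree. My plan is for Client to build a $K_t$-minor via a ``branch-set'' construction. In a first stage Client uses roughly $n-t$ of his edges to assemble $t$ vertex-disjoint connected subgraphs $B_1,\dots,B_t$, exploiting the fact that Waiter must eventually offer every edge of $K_n$, so in particular any Client-usable edge that grows his current branch sets. In a second stage Client spends his surplus of $\approx 2\varepsilon n$ edges on inter-branch connections, aiming to secure at least one edge between every pair $\{B_i,B_j\}$; contracting each $B_i$ then yields $K_t$. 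The bound $t = (\varepsilon n)^{c\varepsilon}$ arises from an iterative (roughly doubling) construction running for $O(\log(\varepsilon n))$ phases, in which the branch sets grow by a constant factor per phase; the exponent $c\varepsilon$ encodes the per-phase growth rate that Client's $\varepsilon n$ surplus of edges can sustain after balancing how many inter-branch edges Waiter can force Client to forfeit.

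\textbf{Main obstacles.} For Part 1, the bound $q \geq n/2-1$ is tight, so the potential argument has essentially no slack: $|W_d|$ must keep up with the quadratic growth of $\Phi$ nearly exactly, and verifying this for the ``target the largest component'' offering rule is where I expect the principal technical work. For Part 2, the main difficulty is Waiter's freedom to defer offering particular edges; Client cannot fix his branch sets in advance, because Waiter may postpone offering any edge useful to a prescribed configuration. Overcoming this will require an adaptive choice of the $B_i$'s together with a pigeonhole/averaging argument showing that enough well-structured Client edges emerge whatever the order in which Waiter offers things.
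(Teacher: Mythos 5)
Your plan for both halves differs substantially from the paper's proof, and in both cases you have left the crux unverified.

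\textbf{Part 1.} The paper does not run a potential argument at all. For $n$ even it sets $q=n/2-1$, observes that $\binom{n}{2}=(q+1)(n-1)$ so the game lasts exactly $n-1$ rounds and Client ends with exactly $n-1$ edges, and then invokes a known result (Theorem~\ref{th::connected}, from~\cite{BHKL}) that Waiter can force Client's graph to be connected whenever $q\leq\lfloor n/2\rfloor-1$. A connected graph on $n$ vertices with $n-1$ edges is a tree, hence acyclic, hence Client's graph was acyclic throughout. For $n$ odd the paper plays the same strategy on an imaginary copy of $K_{n+1}$, using the Client-Waiter rule that Waiter may offer fewer than $q+1$ edges. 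Your direct approach --- offer only ``non-dangerous'' cross-component edges and track $\Phi=\sum_j\binom{n_j}{2}$ --- is plausible in spirit, but you concede the key inequality is unverified, and it is genuinely delicate: the naive estimates $\Phi\leq\binom{i+1}{2}$ and $W_{\mathrm{cross}}\leq qi$ together make $\binom{n}{2}-\Phi-W_{\mathrm{cross}}$ go badly negative near $i=n-2$, so everything rides on showing $|W_d|$ grows nearly as fast as $qi$; your ``target the largest component'' rule is not obviously enough (e.g., if the second-largest component is a single vertex, Waiter cannot concentrate $q+1$ offers between two components and have them all absorbed). If you do succeed, you will essentially have re-derived the connectivity threshold of~\cite{BHKL}, so the cleaner move is to cite it, as the paper does.

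\textbf{Part 2.} The paper does not build branch sets adaptively. It applies Theorem~\ref{th::Oren} (Dean's Client-Waiter analogue of the Erd\H{o}s--Selfridge bound) with $\mathcal{F}$ equal to the family of short cycles, to conclude that Client can secure a subgraph on $\geq(1+\alpha)n$ edges containing few cycles of length below $k\approx\log_3(\alpha n)$. Deleting one edge from each such short cycle yields a graph of average degree at least $2+\alpha$ and girth at least $k$, and then Proposition~\ref{prop::girthMinor} (a purely graph-theoretic lemma: large girth plus supercritical average degree forces a large clique minor) delivers the $K_t$-minor with $t=(\varepsilon n)^{c\varepsilon}$. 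This route sidesteps exactly the adversarial obstacle you identify --- Waiter deciding when to reveal edges --- because the potential-function criterion makes no assumption about offering order. Your branch-set scheme faces that obstacle head-on and you have not resolved it; moreover your own description of the bound does not cohere: ``roughly doubling'' for $O(\log(\varepsilon n))$ phases would give $t$ of order $\varepsilon n$, not $(\varepsilon n)^{c\varepsilon}$. The exponent $c\varepsilon$ really comes from the interplay between the attainable girth $\Theta(\log(\varepsilon n))$ and the surplus average degree $\Theta(\varepsilon)$ in the girth-minor lemma, which has no natural analogue in the iterative branch-set picture as you have sketched it.
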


As simple corollaries of Theorems~\ref{th::WCminor} and~\ref{th::CWminor}, we determine the asymptotic value of the threshold bias of the Waiter-Client and Client-Waiter non-planarity games, respectively.

\begin{corollary} \label{cor::WCplanarity}
Let $n$ be a sufficiently large integer and let $\varepsilon = \varepsilon(n) \geq 5 n^{- 1/4}$. If $q \leq (1 - \varepsilon) n$, then when playing a $(1 : q)$ Waiter-Client game on $E(K_n)$, Waiter has a strategy to force Client to build a non-planar graph. On the other hand, if $q \geq n + \eta$, where $\eta = \eta(n) \geq n^{2/3} \log n$, then when playing a $(1 : q)$ Waiter-Client game on $E(K_n)$, Client can ensure that his graph will remain planar throughout the game.
\end{corollary}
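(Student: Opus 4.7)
The plan is to derive both parts of the corollary directly from Theorem~\ref{th::WCminor}, using Wagner's theorem, which characterises planar graphs as those containing neither $K_5$ nor $K_{3,3}$ as a minor. Since Theorem~\ref{th::WCminor} already provides the game-theoretic content, the only work is to match up the parameters and to connect $K_t$-minor statements to planarity.

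For the first assertion, I would apply Theorem~\ref{th::WCminor} with $t = 5$. The only nontrivial hypothesis to verify is $t \le \varepsilon^2 \sqrt{n}/5$, which for $t = 5$ rearranges to $\varepsilon \ge 5 n^{-1/4}$, precisely the assumption of the corollary (this choice of constant $5$ in the hypothesis is exactly tuned for this step). Theorem~\ref{th::WCminor} then supplies a Waiter strategy forcing Client to build a graph admitting a $K_5$-minor; by Wagner's theorem such a graph is non-planar, which is the conclusion.

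For the second assertion, the conditions on $q$ and $\eta$ in the corollary match those in Theorem~\ref{th::WCminor} verbatim, so Client has a strategy to keep his graph $K_4$-minor free throughout the game. It remains to observe that any $K_4$-minor free graph is planar. Both $K_5$ and $K_{3,3}$ admit a $K_4$-minor: $K_5$ trivially (as a subgraph), and $K_{3,3}$ by contracting two disjoint edges of the bipartition (yielding four vertices with all six pairs adjacent). Hence a $K_4$-minor free graph contains neither $K_5$ nor $K_{3,3}$ as a minor, so by Wagner's theorem it is planar; Client's strategy from Theorem~\ref{th::WCminor} therefore keeps his graph planar throughout the game.

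There is essentially no obstacle here beyond identifying $t=5$ as the right target in part one and verifying the easy Wagner-type implication in part two; the substance of the argument is entirely contained in Theorem~\ref{th::WCminor}.
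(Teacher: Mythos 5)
Your proof is correct and follows essentially the same approach as the paper: apply Theorem~\ref{th::WCminor} with $t=5$ for the first part, and use the $K_4$-minor-free conclusion for the second. The only cosmetic difference is that you invoke Wagner's theorem (minor characterization of planarity, together with transitivity of the minor relation) where the paper invokes Kuratowski's theorem via subdivisions of $K_5$ and $K_{3,3}$; both are equally valid ways to deduce that a $K_4$-minor-free graph is planar.
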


\begin{corollary} \label{cor::CWplanarity}
Let $n$ be a sufficiently large integer. If $q \geq n/2 - 1$, then when playing a $(1 : q)$ Client-Waiter game on $E(K_n)$, Waiter has a strategy to keep Client's graph planar throughout the game. On the other hand, there exists a constant $c > 0$ such that if $q \leq n/2 - c n/\log n$, then when playing a $(1 : q)$ Client-Waiter game on $E(K_n)$, Client has a strategy to build a non-planar graph.
\end{corollary}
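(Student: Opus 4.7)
My plan is to deduce both parts of the corollary directly from Theorem~\ref{th::CWminor}, supplemented only by two standard observations from planarity theory.

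For the Waiter-win half, I note that a graph is $K_3$-minor free precisely when it contains no cycle, that is, when it is a forest, and every forest is planar. Hence the very Waiter strategy produced by the first part of Theorem~\ref{th::CWminor}---valid for every $q \geq n/2 - 1$---already keeps Client's graph planar throughout the game, giving the first statement of the corollary without further argument.

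For the Client-win half, I invoke Wagner's theorem: any graph containing $K_5$ as a minor is non-planar. It therefore suffices to calibrate the second part of Theorem~\ref{th::CWminor} so that the guaranteed $K_t$-minor satisfies $t \geq 5$. Let $c_0 > 0$ denote the absolute constant appearing in Theorem~\ref{th::CWminor}, so Client can force a $K_t$-minor with $t = (\varepsilon n)^{c_0 \varepsilon}$ whenever $q \leq (1/2 - \varepsilon) n$. Plugging in $\varepsilon = C/\log n$ for a constant $C$ to be chosen, I compute $\log t = c_0 \varepsilon \log(\varepsilon n) = c_0 C (1 - o(1))$, so picking $C$ large enough that $c_0 C > \log 5$ forces $t \geq 5$ for all sufficiently large $n$. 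Since $(1/2 - \varepsilon) n = n/2 - C n/\log n$, setting $c := C$ yields the required threshold, and the $K_5$-minor Client has built is by Wagner non-planar.

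There is essentially no genuine obstacle, as all the heavy lifting is already inside Theorem~\ref{th::CWminor}; the only point of care is the elementary observation that $\varepsilon = \Theta(1/\log n)$ is precisely the scale at which the exponent $c_0 \varepsilon \log(\varepsilon n)$ stabilises to a positive constant, which is exactly what is needed for $t$ to cross the fixed threshold $5$ demanded by Wagner's theorem. Consequently the logarithmic correction $c n/\log n$ in the statement of the corollary is an unavoidable consequence of the shape of the bound $t = (\varepsilon n)^{c_0 \varepsilon}$ and is not an artefact of slack in the argument.
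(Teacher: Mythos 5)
Your proof is correct and follows essentially the same route as the paper: the Waiter-win half cites the $K_3$-minor-free (forest) direction of Theorem~\ref{th::CWminor}, and the Client-win half sets $\varepsilon = \Theta(1/\log n)$ in the second part of Theorem~\ref{th::CWminor} to force a $K_5$-minor. Your explicit verification that $\log t = c_0\,\varepsilon \log(\varepsilon n) = c_0 C(1-o(1))$ stabilises to a constant, and can be pushed past $\log 5$ by taking $C$ large, is exactly the (unwritten) calculation behind the paper's phrase ``for a sufficiently large constant $c$''.
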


It was proved in~\cite{HKSSpcm} that there exist absolute constants $c_1 \geq c_2 > 0$ such that the threshold bias of the Maker-Breaker non-$k$-colorability game $\mathcal{NC}_k$ is between $c_1 n/(k \log k)$ and $c_2 n/(k \log k)$; when $k$ tends to infinity, one can take $c_1 = 2 + o(1)$ and $c_2 = \log 2/2 - o(1)$. Up to the values of the constants $c_1$ and $c_2$, this matches the probabilistic intuition, as the threshold probability for the non-$k$-colorability of the random graph $G(n,p)$ is about $p = (2 k \log k)/n$~\cite{AN}. It was also proved in~\cite{HKSSpcm} that Enforcer has a winning strategy for the $(1 : q)$ strict Avoider-Enforcer game $(E(K_n), \mathcal{NC}_k)$, whenever $q \leq c n/ (k \log k)$ for an appropriate absolute constant $c > 0$. On the other hand, improving a result from~\cite{HKSSpcm}, it was recently proved in~\cite{CEPT} that, if $q \geq 200 n \log n$, then Avoider has a strategy to keep his graph $3$-colorable. We will prove that the Waiter-Client and Client-Waiter non-$k$-colorability games behave similarly to the corresponding Maker-Breaker games. In particular, they exhibit a similar probabilistic intuition.    

\begin{theorem} \label{th::kColorabilityWC}
Let $k \geq 2$ be a fixed integer and let $n$ be a sufficiently large integer. Then there exists a function $\alpha = \alpha(k) > 0$ which tends to $0$ as $k$ tends to infinity such that the following holds. If $q \geq (8e + \alpha) n/(k \log k)$, then when playing a $(1 : q)$ Waiter-Client game on $E(K_n)$, Client can ensure that his graph will be $k$-colorable throughout the game. On the other hand, if $q \leq (\log 2/4 - \alpha) n/(k \log k)$, then when playing a $(1 : q)$ Waiter-Client game on $E(K_n)$, Waiter can force Client to build a non-$k$-colorable graph. 
\end{theorem}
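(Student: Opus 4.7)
Both bounds track the probabilistic intuition: $G(n,p)$ is a.a.s.\ non-$k$-colourable precisely when $p$ exceeds $(2k\log k)/n$~\cite{AN}, so with $p \sim 1/(q+1)$ one expects the Waiter-Client threshold bias to be of order $n/(k\log k)$, as the theorem confirms. The plan is to handle each direction with an Erdős-Selfridge-type potential function argument, mirroring the Maker-Breaker treatment carried out in~\cite{HKSSpcm}. For the upper bound (Client to keep his graph $k$-colourable when $q \geq (8e+\alpha)n/(k\log k)$), assign to each $\phi:V(K_n)\to[k]$ the weight $\beta^{m_\phi(C)}$, where $C$ is Client's current graph, $m_\phi(C)$ counts the edges of $C$ that are monochromatic under $\phi$, and $\beta\in(0,1)$ is a parameter to be tuned. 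The potential $\Phi = \sum_\phi \beta^{m_\phi(C)}$ starts at $k^n$, and only Client's moves decrease it: claiming edge $e$ drops $\Phi$ by $(1-\beta)S_e$, where $S_e = \sum_{\phi:\,e\text{ mono}} \beta^{m_\phi(C)}$. Client plays greedily, picking the offered edge minimising $S_e$; averaging over the $q+1$ offered edges and summing across Client's at most $\binom{n}{2}/(q+1)$ rounds, together with the observation that monochromatic edges form a $\sim 1/k$ fraction of $K_n$ under a balanced colouring, lets one tune $\beta = 1 - \Theta(1/(k\log k))$ so that $\Phi > k^n \beta$ holds at the end. Since every summand of $\Phi$ is at most $1$, this strict inequality forces some $\phi$ with $m_\phi(C)=0$, i.e.\ a proper $k$-colouring of Client's graph; the leading constant $8e$ emerges from the optimisation over $\beta$.

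For the lower bound (Waiter forcing non-$k$-colourability when $q \leq (\log 2/4 - \alpha)n/(k\log k)$), I would use a dual potential on Waiter's side. Client's graph $C$ is $k$-colourable iff $A_\phi \cap C = \emptyset$ for some $\phi$, where $A_\phi$ is the set of $\phi$-monochromatic edges of $K_n$. Consider
$$\Psi = \sum_{\phi:\,V(K_n)\to[k]} \beta^{|A_\phi \setminus C|} \cdot \mathbf{1}[A_\phi \cap C = \emptyset];$$
a colouring $\phi$ stops contributing to $\Psi$ the moment Client claims an edge of $A_\phi$, and Waiter wins as soon as no $\phi$ contributes. In each round Waiter offers the $q+1$ edges that maximise the guaranteed decrease of $\Psi$; an averaging argument analogous to the one above ensures a multiplicative drop per round. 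Restricting the sum to balanced colourings (which dominate the count and satisfy $|A_\phi| = (1+o_k(1))\,n^2/(2k)$) and optimising $\beta$ gives the constant $\log 2/4$.

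The main obstacle is the Waiter side: extracting the sharp constant requires carefully splitting $\Psi$ into contributions from balanced versus unbalanced colourings, since unbalanced $\phi$ carry large weight (many $\phi$-monochromatic edges) but are killed quickly, while balanced $\phi$ form the genuine bottleneck and must be tracked with near-optimal parameter choices. The error $\alpha(k)\to 0$ absorbs both the sub-leading terms in the entropy count of balanced colourings and the $o_k(1)$ correction in the random-graph threshold of~\cite{AN}, and the final consistency check is that both directions match, up to the $\alpha$-error, the corresponding Maker-Breaker bounds of~\cite{HKSSpcm}.
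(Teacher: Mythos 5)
Your Waiter-side argument is in the right spirit, but your Client-side potential argument breaks down for a budget reason that makes the whole scheme unworkable, and the paper uses a fundamentally different structural route.

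For the Client direction, consider $\Phi_0 = k^n$ and the target $\Phi_{\mathrm{end}} > k^n\beta$, so the total decrease you can afford is $k^n(1-\beta)$. But at the start of the game $S_e = k^{n-1}$ for \emph{every} edge $e$ (exactly $k \cdot k^{n-2}$ of the $k^n$ colourings make a fixed edge monochromatic), so Client's very first move already costs $(1-\beta)k^{n-1}$, regardless of which offered edge he takes. Your budget therefore permits only about $k$ of Client's moves, while Client makes $\binom{n}{2}/(q+1) = \Theta(nk\log k)$ moves. The exponential decay of $\Phi$ that kicks in is off by a factor of order $n\log k$, and tuning $\beta$ cannot fix this --- the ratio (budget)/(per-round cost) $= k^n(1-\beta)/\bigl((1-\beta)k^{n-1}\bigr) = k$ is independent of $\beta$. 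Also note that ``averaging over the $q+1$ offered edges'' does not help: Waiter chooses which edges to offer, so he can always present Client with edges of near-maximal $S_e$, and in any case initially all $S_e$ are equal. The paper instead applies Theorem~\ref{th::ClientBES} to three carefully chosen families --- pairs of intersecting cycles of length $\le 4$, vertex sets spanning more than $|S|k\log k/16$ edges, and vertex sets that are both internally dense and have a large edge boundary --- to show Client can build a graph satisfying the resulting three structural properties. The $k$-colourability of such a graph is then deduced by splitting the low-degree vertices into two girth-$5$ parts, invoking Kim's theorem $\chi \le (1+o(1))\Delta/\log\Delta$ on each, and ruling out a high-chromatic remainder via the density conditions; the constant $8e$ comes from this decomposition and Kim's bound, not from any single-potential optimisation.

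For the Waiter direction, your plan is closer to viable but more complicated than necessary. The paper applies Theorem~\ref{th::WaiterBES} to the family $\mathcal F$ of edge-sets of $\lceil n/k\rceil$-cliques: the computed potential is $o(1)$, so Waiter wins the transversal game $(E(K_n),\mathcal F^*)$, forcing Client to claim an edge inside every $\lceil n/k\rceil$-set. This gives $\alpha(G_C) < \lceil n/k\rceil$ and hence $\chi(G_C) > k$ directly. This avoids the balanced-versus-unbalanced colouring split you anticipate having to fight with, and the constant $\log 2/4$ drops out of the single clique calculation.
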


\begin{theorem} \label{th::kColorabilityCW}
Let $k \geq 2$ be a fixed integer and let $n$ be a sufficiently large integer. Then there exists a function $\alpha = \alpha(k) > 0$ which tends to $0$ as $k$ tends to infinity such that the following holds. If $q \geq (4 + \alpha) n/(k \log k)$, then when playing a $(1 : q)$ Client-Waiter game on $E(K_n)$, Waiter has a strategy to ensure that Client builds a $k$-colorable graph. On the other hand, if $q \leq (\log 2/2 - \alpha) n/(k \log k)$, then when playing a $(1 : q)$ Client-Waiter game on $E(K_n)$, Client has a strategy to build a non-$k$-colorable graph. 
\end{theorem}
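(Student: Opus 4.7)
The plan is to prove both bounds by potential-function (Erd\H{o}s--Selfridge-type) arguments matching the probabilistic intuition: the $k$-colorability threshold of a graph on $n$ vertices with $m\asymp\binom{n}{2}/(q+1)$ edges sits at $q\asymp n/(k\log k)$, so both constants should be of order one.

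For the upper bound (Waiter at $q\ge (4+\alpha)n/(k\log k)$), I would invoke the standard Erd\H{o}s--Selfridge-style criterion adapted to Client-Waiter games, which yields a winning strategy for Waiter once $\sum_{A\in\mathcal{F}}(q+1)^{-|A|}$ is below an absolute constant. Take $\mathcal{F}$ to be the family of all $(k+1)$-critical subgraphs of $K_n$; a graph is non-$k$-colorable iff it contains a member of $\mathcal{F}$. To bound the potential, stratify $\mathcal{F}$ by the number of vertices $s$ and the number of edges $m\ge ks/2$ of each critical member (using that $\delta\ge k$). The dense regime $m>Cks\log k$ is routine, since $(q+1)^{-m}$ decays faster than any crude enumeration of non-$k$-colorable graphs. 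The sparse regime $ks/2\le m\le Cks\log k$ is the delicate part and requires sharp upper bounds on the number of non-$k$-colorable graphs on $s$ vertices with $m$ edges just above the chromatic threshold, which can be extracted from random-graph coloring estimates in the Achlioptas--Naor line of work.

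For the lower bound (Client at $q\le (\log 2/2-\alpha)n/(k\log k)$), I would dually apply the analogous criterion in its ``Client hits all sets'' form. Think of the game as one in which Client must claim at least one edge of $M_\chi$ for every $k$-coloring $\chi$ of $V(K_n)$, where $M_\chi$ is the set of edges of $K_n$ that are monochromatic under $\chi$. Maintain the weighted potential
\[
T \;=\; \sum_{\chi\text{ not yet hit}} 2^{-|M_\chi\cap G_W|/(q+1)},
\]
where $G_W$ is Waiter's current graph, and play greedily so as to minimize the growth of $T$ caused by Waiter's accumulation of $M_\chi$-edges. Since the game terminates with a bound on $T$ in terms of $\sum_\chi 2^{-|M_\chi|/(q+1)}$, and balanced colorings satisfy $|M_\chi|\approx n^2/(2k)$ with about $k^n$ such $\chi$, the key inequality $k^n\cdot 2^{-n^2/(2k(q+1))}<1$ certifying that Client has hit every coloring amounts to $q+1<n\log 2/(2k\log k)$, matching the claimed constant.

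The main obstacle, on both sides, is to sharpen the constants to the claimed $4$ and $\log 2/2$ rather than a crude $\Theta(1/(k\log k))$ threshold. On Waiter's side the sparse-regime enumeration of non-$k$-colorable graphs must be pushed essentially to its correct exponential order; the constant $4$, roughly twice the corresponding Maker-Breaker constant from~\cite{HKSSpcm}, reflects the inherent slack of a $(q+1)^{-|A|}$ potential versus the Maker-Breaker $2^{-|A|}$ potential. On Client's side the delicate point is that Waiter chooses which $q+1$ edges to offer in each round, so a careful averaging argument — done with base $2$ rather than base $e$ — is what recovers $\log 2/2$ rather than the first-moment value $1/2$; the lower-order error terms are then absorbed into $\alpha(k)\to 0$, which is why the statement must allow $k$ to be sufficiently large.
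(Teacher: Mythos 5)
Your lower-bound argument is in the right spirit and essentially mirrors the paper's: the paper applies Theorem~\ref{th::ClientBESpreventing} to the family of edge-sets of cliques on $\lceil n/k\rceil$ vertices (so that Client hitting every such clique forces $\alpha(G_C) < n/k$, hence $\chi(G_C) > k$), while you take the family $\{M_\chi\}$ of monochromatic edge-sets of $k$-colorings; the dominant contribution in your sum is from near-balanced colorings and gives essentially the same constant. One remark: your framing that the base-$2$ potential ``recovers $\log 2/2$ rather than the first-moment value $1/2$'' reverses the intuition --- the loose inequality $(q/(q+1))^{|A|} \le 2^{-|A|/q}$ is what \emph{costs} the $\log 2$ factor; the first-moment bound $1/2$ would in fact be the sharper one. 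Since the theorem only asserts $\log 2/2$ the cruder bound suffices, but you should not describe it as the tight value.

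For the upper bound there is a genuine gap, in two places. First, you invoke ``the standard Erd\H{o}s--Selfridge-style criterion adapted to Client-Waiter games'' giving a winning strategy for Waiter once $\sum_{A\in\mathcal{F}}(q+1)^{-|A|}$ is small. No such criterion is available in this paper, and I do not know one in the literature. Theorem~\ref{th::Oren} and Theorem~\ref{th::ClientBESpreventing} both concern \emph{Client's} strategies in Client-Waiter games; Theorem~\ref{th::WaiterBES} concerns Waiter's strategy to \emph{force} Client into a set, which is the opposite direction. Waiter's goal here is prevention (keeping $G_C$ out of $\mathcal{NC}_k$), for which no clean potential sufficient condition is on the table. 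Second, even granting such a criterion, the plan to run it over $(k+1)$-critical subgraphs collapses on exactly the sparse regime you flag as ``the delicate part'': to make $\Phi$ small at $q = \Theta(n/(k\log k))$ one would need enumeration of non-$k$-colorable (or $(k+1)$-critical) graphs with $m\approx Cks\log k$ edges that is sharp in the exponent, and this is not provided by the Achlioptas--Naor threshold result nor by any estimate I know; this is not a ``routine'' computation. The paper sidesteps both problems entirely by giving Waiter an explicit two-stage offering strategy: Stage~I steers Client into a graph $H_1$ of girth at least $5$ and maximum degree at most $(1-\varepsilon) k\log k/2$ (by always re-offering non-dangerous free edges at $x_i$ and $y_i$ after Client claims $x_iy_i$), so Kim's theorem (Theorem~\ref{th::girth5}) gives $\chi(H_1)\le k/2$; Stage~II forces the remaining Client edges to form a linear forest $H_2$ with $\chi(H_2)\le 2$; then $\chi(G_C)\le\chi(H_1)\chi(H_2)\le k$. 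This constructive route is what makes the constant $4$ come out, and it is not a potential argument in disguise. You would need to replace your upper-bound plan with something along these lines.
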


\section{Preliminaries}

\noindent For the sake of simplicity and clarity of presentation, we do not make a par\-ti\-cu\-lar effort to optimize the constants obtained in some of our proofs. We also omit floor and ceiling signs whenever these are not crucial. Most of our results are asymptotic in nature and whenever necessary we assume that the number of vertices $n$ is sufficiently large. Throughout this paper, $\log$ stands for the natural logarithm, unless explicitly stated otherwise. Our graph-theoretic notation is standard and follows that of~\cite{West}. In particular, we use the following.

For a graph $G$, let $V(G)$ and $E(G)$ denote its sets of vertices and edges respectively, and let $v(G) = |V(G)|$ and $e(G) = |E(G)|$. For a set $A \subseteq V(G)$, let $E_G(A)$ denote the set of edges of $G$ with both endpoints in $A$ and let $e_G(A) = |E_G(A)|$. For disjoint sets $A,B \subseteq V(G)$, let $E_G(A,B)$ denote the set of edges of $G$ with one endpoint in $A$ and one endpoint in $B$, and let $e_G(A,B) = |E_G(A,B)|$. For a set $S \subseteq V(G)$, let $G[S]$ denote the subgraph of $G$ which is induced on the set $S$. The \emph{degree} of a vertex $u \in V(G)$ in $G$, denoted by $d_G(u)$, is the number of edges $e \in E(G)$ such that $u \in e$. The \emph{maximum degree} of a graph $G$ is $\Delta(G) = \max \{d_G(u) : u \in V(G)\}$ and the \emph{minimum degree} of a graph $G$ is $\delta(G) = \min \{d_G(u) : u \in V(G)\}$. Often, when there is no risk of confusion, we omit the subscript $G$ from the notation above. 

A graph is called a \emph{linear forest} if each of its connected components is a path. The \emph{girth} of a graph $G$ is the number of edges in a shortest cycle of $G$ (if $G$ is a forest, then its girth is infinity). A set $A \subseteq V(G)$ is said to be \emph{independent} in $G$ if $E_G(A) = \emptyset$. The \emph{independence number} of a graph $G$, denoted by $\alpha(G)$, is the maximum size of an independent set in $G$. The \emph{clique number} of a graph $G$, denoted by $\omega(G)$, is the size of a largest set $A \subseteq V(G)$ such that $uv \in E(G)$ for every two vertices $u,v \in A$. The \emph{chromatic number} of a graph $G$, denoted by $\chi(G)$, is the smallest integer $k$ for which $V(G)$ can be partitioned into $k$ independent sets. For a positive integer $t$ and a graph $G$, we say that \emph{$G$ admits a $K_t$-minor} if, for every $1 \leq i \leq t$, there exists a set $B_i \subseteq V(G)$ such that the following three properties hold:
\begin{description}
\item [(i)] $G[B_i]$ is connected for every $1 \leq i \leq t$.
\item [(ii)] $B_i \cap B_j = \emptyset$ for every $1 \leq i < j \leq t$.
\item [(iii)] $E_G(B_i, B_j) \neq \emptyset$ for every $1 \leq i < j \leq t$.
\end{description}   
Note that a graph is $K_3$-minor free if and only if it is a forest.     

Let $X$ be a finite set and let ${\mathcal F}$ be a family of subsets of $X$. The \emph{transversal} family of ${\mathcal F}$ is ${\mathcal F}^* := \{A \subseteq X : A \cap B \neq \emptyset \textrm{ for every } B \in {\mathcal F}\}$.  

Assume that some Waiter-Client or Client-Waiter game, played on the edge-set of some graph $H = (V,E)$, is in progress (in some of our arguments, we will consider games played on graphs other than $K_n$). At any given moment during this game, let $E_W$ denote the set of all edges that were claimed by Waiter up to that moment, let $E_C$ denote the set of all edges that were claimed by Client up to that moment, let $G_W = (V, E_W)$ and let $G_C = (V, E_C)$. Moreover, let $G_F = (V, E_F)$, where $E_F = E \setminus (E_W \cup E_C)$; the edges of $E_F$ are called \emph{free}.   

\bigskip

Next, we state four game-theoretic results which will be used repeatedly in this paper. The first such result is due to Beck~\cite{TTT}. He observed that a straightforward adaptation of his proof of a sufficient condition for Breaker's win, which is based on the well-known potential method of Erd\H{o}s and Selfridge~\cite{ES}, yields the following.

\begin{theorem}[implicit in~\cite{TTT}] \label{th::ClientBES} 
Let $q$ be a positive integer, let $X$ be a finite set, let ${\mathcal F}$ be a family of (not necessarily distinct) subsets of $X$ and let $\Phi(\mathcal{F}) = \sum_{A \in \mathcal{F}} (q+1)^{-|A|}$. Then, when playing the $(1 : q)$ Waiter-Client game $(X, {\mathcal F})$, Client has a strategy to avoid fully claiming more than $\Phi(\mathcal{F})$ sets in ${\mathcal F}$.  
\end{theorem}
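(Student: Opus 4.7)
The plan is to prove Theorem~\ref{th::ClientBES} by a straightforward adaptation of the Erd\H{o}s--Selfridge potential argument, where the base of the potential is $q+1$ rather than $2$ (reflecting the fact that Waiter offers $q+1$ elements per round from which Client picks one). Throughout the game, let $E_C, E_W, E_F$ denote (as in the excerpt) the sets of elements currently claimed by Client, claimed by Waiter, and still free. Define the potential
\[
\Phi := \sum_{\substack{A \in \mathcal F \\ A \cap E_W = \emptyset}} (q+1)^{-|A \cap E_F|}.
\]
Note that $\Phi = \Phi(\mathcal F)$ at the start (when $E_W = \emptyset$ and $E_F = X$), and when the game ends (when $E_F = \emptyset$) every set surviving in the sum satisfies $A \subseteq E_C$ and contributes exactly $1$. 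Hence the final value of $\Phi$ upper bounds the number of sets $A \in \mathcal F$ fully claimed by Client. The goal is therefore to exhibit a strategy for Client that ensures $\Phi$ never increases.

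The strategy is greedy: in each round in which Waiter offers $q+1$ free elements $x_0, x_1, \ldots, x_q$, Client claims the element $x_i$ that minimizes the resulting value of $\Phi$. To show this keeps $\Phi$ from growing, I will bound the sum of the $q+1$ hypothetical new potentials. Fix a set $A$ with $A \cap E_W = \emptyset$ at the start of the round, and let $B = A \cap \{x_0, \ldots, x_q\}$. If $|B| \geq 2$, then for every choice $i$ of Client, at least one element of $B$ is claimed by Waiter, so $A$ is killed and contributes $0$ to every $\Phi_i$. If $|B| \leq 1$, then $A$ survives precisely when the unique element of $B$ (if any) equals $x_i$; in that case the new weight of $A$ is $(q+1)^{-|A \cap E_F| + |B|}$, which equals $(q+1) \cdot (q+1)^{-|A \cap E_F|}$ if $|B|=1$ and $(q+1)^{-|A \cap E_F|}$ if $|B|=0$. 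Summing over all $i$:
\[
\sum_{i=0}^{q} \Phi_i \;\leq\; \sum_{\substack{A \cap E_W = \emptyset \\ |B|=0}} (q+1) \cdot (q+1)^{-|A \cap E_F|} \;+\; \sum_{\substack{A \cap E_W = \emptyset \\ |B|=1}} (q+1) \cdot (q+1)^{-|A \cap E_F|} \;\leq\; (q+1)\,\Phi,
\]
so $\min_i \Phi_i \leq \Phi$ and Client's greedy choice does not increase the potential.

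It remains to handle the final round, which may contain only $t < q+1$ free elements. By the game's rules for a $(1:q)$ game, Client claims $\max\{0, t-q\} = 0$ elements and Waiter claims all $t$ of them. This can only move elements from $E_F$ into $E_W$, which can only kill surviving sets or leave them unchanged; hence $\Phi$ is non-increasing in this round too. Combining all rounds, the final value of $\Phi$ is at most the initial value $\Phi(\mathcal F)$, which gives the stated bound on the number of sets that Client fully claims. The only subtlety in the argument is the averaging step, which is really the same two-cases split (``no element of $A$ offered'' versus ``exactly one element of $A$ offered'') as in the classical Erd\H{o}s--Selfridge proof, adjusted so that each surviving set contributes a uniform factor of $(q+1)$ to the summed potential; this is where the base $q+1$ of the exponent in $\Phi(\mathcal F)$ comes from.
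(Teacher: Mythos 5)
Your proof is correct, but it takes a genuinely different route from the paper's. The paper proves Theorem~\ref{th::ClientBES} probabilistically: Client plays randomly, choosing uniformly among the offered elements; for any fixed set $A$, the chance Client claims all of $A$ is at most $(q+1)^{-|A|}$ (since if Waiter ever offers two elements of $A$ at once, $A$ is immediately lost, and otherwise each required choice has probability $1/(q+1)$); by linearity of expectation the expected number of fully claimed sets is at most $\Phi(\mathcal F)$, so with positive probability Client does no worse, and determinacy of the finite perfect-information game then yields a deterministic strategy. You instead run a deterministic Erd\H{o}s--Selfridge potential argument with potential $\sum_{A : A \cap E_W = \emptyset} (q+1)^{-|A \cap E_F|}$ and a greedy potential-minimizing choice, showing $\sum_i \Phi_i \le (q+1)\Phi$ by splitting on $|A \cap \{x_0,\dots,x_q\}|$ and handling the short final round separately. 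Both are correct; the potential method produces an explicit, constructive strategy for Client, whereas the paper's random-play argument is shorter but non-constructive. Incidentally, the potential-method proof is the one the paper attributes to Beck's monograph (``implicit in~\cite{TTT}''), so you have essentially reconstructed the original argument rather than the streamlined probabilistic proof the authors chose to include.
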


We include a short proof which is based on a random strategy.

\noindent \emph{Proof of Theorem~\ref{th::ClientBES}}.
Fix an arbitrary strategy ${\mathcal S}_W$ of Waiter. Client plays randomly, i.e., in each round he claims one of the elements he is offered uniformly at random and independently of all other choices. Let $Y_C$ denote the set of board elements Client has claimed throughout the game. Fix an arbitrary set $A \in {\mathcal F}$. If, in some round, Waiter offers Client at least two elements of $A$, then surely Client will not fully claim $A$. Hence, if $A \subseteq Y_C$, then Waiter must have offered Client the elements of $A$ in $|A|$ different rounds, one element per round. Moreover, in each such round he must have offered precisely $q+1$ elements, since, if Waiter offers less (recall that this might happen in the last round), then he claims all of them. Even though the elements Waiter offers in some round may depend on the previous choices made by Client, in every round in which Waiter offered Client an element of $A$, Client claimed this (unique) element with probability $1/(q+1)$. Since Client's choices are independent, it follows that $Pr(A \subseteq Y_C) \leq (q+1)^{- |A|}$. Since $A$ was arbitrary we have $\mathbb{E}(|\{A \in {\mathcal F} : A \subseteq Y_C\}|) \leq \Phi(\mathcal{F})$. It follows that, if Waiter plays according to ${\mathcal S}_W$ and Client plays randomly, then with positive probability Client can avoid fully claiming more than $\Phi(\mathcal{F})$ sets in ${\mathcal F}$. Since ${\mathcal S}_W$ was arbitrary, this is true for any fixed strategy of Waiter. Hence, there is no strategy for Waiter to force Client to fully claim more than $\Phi(\mathcal{F})$ sets in ${\mathcal F}$. Since this is a finite, perfect information game with no chance moves, we conclude that there exists a strategy for Client to avoid fully claiming more than $\Phi(\mathcal{F})$ sets in ${\mathcal F}$.   
{\hfill $\Box$ \medskip\\}

When proving Theorem~\ref{th::CWminor} we will want to use a criterion similar to the one stated in Theorem~\ref{th::ClientBES}. However, since in Client-Waiter games Waiter is allowed to offer less board elements per round than his bias specifies, we cannot expect Client to avoid fully claiming certain sets. The following result of O. Dean~\cite{Dean} overcomes this problem.

\begin{theorem} [\cite{Dean}] \label{th::Oren}
Let $q$ be a positive integer, let $X$ be a finite set, let ${\mathcal F}$ be a family of (not necessarily distinct) subsets of $X$ and let $\Phi(\mathcal{F}) = \sum_{A \in \mathcal{F}} (q+1)^{-|A|}$. Then, when playing the $(1 : q)$ Client-Waiter game $(X, {\mathcal F})$, Client has a strategy to claim the elements of a set $X_C \subseteq X$ of size $|X_C| \geq \lfloor |X|/(q+1) \rfloor$ which fully contains at most $2 \Phi(\mathcal{F})$ sets in ${\mathcal F}$.
\end{theorem}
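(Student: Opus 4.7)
The plan is to adapt the probabilistic argument behind the proof of Theorem~\ref{th::ClientBES} to the Client-Waiter setting. The new difficulty, absent in Waiter-Client games, is that Waiter may offer as few as one element per round, which forces Client to claim elements he would rather leave alone; the key idea is to let Client distinguish between the set of elements he is forced to claim and a subset $X_C$ of them that he will ``count.'' Concretely, in each round in which Waiter offers $t$ elements $y_1, \dots, y_t$, Client selects one of them uniformly at random (as required by the rules), and then, independently with probability $t/(q+1)$, \emph{marks} the chosen element by adding it to $X_C$. Elements that Client selects but does not mark are still claimed by him under the game rules --- they simply do not enter $X_C$.

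For the bound on the number of sets in $\mathcal{F}$ that are fully contained in $X_C$, fix $A \in \mathcal{F}$ with $|A|=k$. For $A \subseteq X_C$ to occur, the $k$ elements of $A$ must be spread across $k$ distinct rounds: if two of them are offered together, then at most one is ever selected by Client and hence at most one can enter $X_C$. In each of these rounds, the relevant element of $A$ must be both selected (conditional probability $1/t_i$) and marked (conditional probability $t_i/(q+1)$), contributing a factor $1/(q+1)$. Since Client's per-round coin flips are independent of Waiter's (possibly adaptive) strategy, a conditioning argument over the round structure gives $\Pr[A \subseteq X_C] \leq (q+1)^{-k}$, and linearity of expectation yields $\mathbb{E}\,|\{A \in \mathcal{F} : A \subseteq X_C\}| \leq \Phi(\mathcal{F})$. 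Markov's inequality then shows $\Pr[|\{A : A \subseteq X_C\}| > 2\Phi(\mathcal{F})] < 1/2$.

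For the size bound, in round $i$ a mark occurs with probability $t_i/(q+1)$, so $\mathbb{E}\,|X_C| = \sum_i t_i/(q+1) = |X|/(q+1)$. What remains is to argue that $|X_C| \geq \lfloor |X|/(q+1) \rfloor$ holds with probability strictly greater than $1/2$; combined with the Markov estimate, this yields a positive probability that both bounds hold simultaneously, and a standard derandomization argument then produces the claimed deterministic strategy. The main obstacle I anticipate is concentrating $|X_C|$ sharply enough around its mean: the variance can be of order $|X|/(q+1)$, so a naive Chebyshev bound is too weak. I would sidestep this by replacing the independent per-round coin flips with a carefully coupled randomized schedule that fixes $|X_C|$ deterministically at $\lfloor |X|/(q+1) \rfloor$ --- for example, choosing a single uniformly random offset $r \in \{0,\dots,q\}$ at the outset and marking exactly those chosen elements whose position in the sequence of offered elements crosses the residue class $r \pmod{q+1}$ --- and then verifying that, thanks to the symmetry of this coupling, one still has $\Pr[A \subseteq X_C] \leq C (q+1)^{-|A|}$ for an absorbing constant $C$; the factor $2$ in the statement is exactly what is needed to accommodate the small loss from decoupling selection and marking.
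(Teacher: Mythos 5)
Your strategy and the bound $\Pr[A \subseteq X_C] \leq (q+1)^{-|A|}$ via Markov are exactly the paper's, and you correctly identify the remaining task: showing that $\Pr[|X_C| \geq \lfloor |X|/(q+1)\rfloor]$ exceeds $1/2$. However, the workaround you propose for that step does not work, and this is a genuine gap rather than a presentational shortcut. With a single random offset $r$ determining which positions are ``marked,'' the marking events across rounds become fully correlated, and this destroys the crucial product bound on $\Pr[A\subseteq X_C]$ --- not by a constant, but by a factor as large as $(q+1)^{|A|-1}$. Concretely, suppose Waiter offers one element per round (which he may, in a Client-Waiter game). Then Client is forced to claim every element, the only randomness left is $r$, and for $A=\{a_1,a_2\}$ with $a_1,a_2$ offered in rounds $i_1 \equiv i_2 \pmod{q+1}$ one has $\Pr[A\subseteq X_C] = 1/(q+1)$, not $O\bigl((q+1)^{-2}\bigr)$. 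Your remark that ``the factor $2$ in the statement is exactly what is needed to accommodate the small loss'' is therefore mistaken on two counts: the loss is not small, and the factor $2$ is already spent on the Markov step for the first half of the conclusion, so nothing is left to absorb anything.

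The paper's proof (due to Dean) keeps the independent per-round coin flips --- so that $\Pr[A \subseteq X_C] \leq (q+1)^{-|A|}$ survives intact --- and handles the size of $X_C$ not by coupling but by a median estimate: writing $|X_C|=\sum_i Z_i$ as a sum of independent Bernoulli random variables with $\Pr[Z_i=1]=|W_i|/(q+1)$, it invokes Theorem~5 of Hoeffding's 1956 paper~\cite{Hoeffding} to compare this sum with a binomial of the same mean, and then uses the standard fact that a binomial random variable meets or exceeds the floor of its mean with probability at least $1/2$. That gives $\Pr[|X_C|\geq \lfloor |X|/(q+1)\rfloor]\geq 1/2$ directly, with no need to tamper with the marking distribution. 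So the missing ingredient in your write-up is precisely this median bound for sums of independent (but not identically distributed) Bernoulli indicators; once you know that result exists, your original independent-coin strategy already finishes the proof, and the correlated-offset detour should be discarded.
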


For the sake of completeness we include Dean's proof.    

\noindent \emph{Proof of Theorem~\ref{th::Oren}}.
For every positive integer $i$, let $W_i$ denote the set of board elements Waiter offers Client in round $i$ and let $\alpha_i = |W_i|/(q+1)$. Client plays randomly, i.e., for every $i$, in the $i$th round he claims an element of $W_i$ uniformly at random and independently of all other choices. Moreover, for every $i$, in the $i$th round he places his chosen element in $X_C$ with probability $\alpha_i$. 

Fix an arbitrary $x \in X$ and let $i$ denote the unique integer for which $x \in W_i$. Then $Pr(x \in X_C) = 1/|W_i| \cdot \alpha_i = 1/(q+1)$. Now, fix an arbitrary set $A \in {\mathcal F}$. If, in some round, Waiter offers Client at least two elements of $A$, then surely Client will not fully claim $A$. Hence, as in our proof of Theorem~\ref{th::ClientBES}, $Pr(A \subseteq X_C) \leq (q+1)^{- |A|}$ and thus $\mathbb{E}(|\{A \in {\mathcal F} : A \subseteq X_C\}|) \leq \Phi(\mathcal{F})$. It then follows by Markov's inequality that 
\begin{equation} \label{eq::fewWinningSets}
Pr(|\{A \in {\mathcal F} : A \subseteq X_C\}| > 2 \Phi(\mathcal{F})) < 1/2.
\end{equation}

Let $m$ denote the total number of rounds played in the game. Note that $|X_C| = \sum_{i=1}^m Z_i$, where $Z_1, \ldots, Z_m$ are independent Bernoulli random variables with $Pr(Z_i = 1) = \alpha_i$ for every $1 \leq i \leq m$. In particular, 
$$
\mathbb{E}(|X_C|) = \sum_{i=1}^m \mathbb{E}(Z_i) = \sum_{i=1}^m \alpha_i = |X|/(q+1).  
$$
Hence
\begin{equation} \label{eq::largeChosenSet}
Pr(|X_C| \geq \lfloor |X|/(q+1) \rfloor) \geq Pr(Bin(|X|, 1/(q+1)) \geq \lfloor |X|/(q+1) \rfloor) \geq 1/2,
\end{equation} 
where the first inequality holds by Theorem 5 from~\cite{Hoeffding}.

Combining~\eqref{eq::fewWinningSets} and~\eqref{eq::largeChosenSet} we conclude that there exists a strategy for Client to ensure that both $|X_C| \geq \lfloor |X|/(q+1) \rfloor$ and $|\{A \in {\mathcal F} : A \subseteq X_C\}| \leq 2 \Phi(\mathcal{F})$ will hold at the end of the game. 
{\hfill $\Box$ \medskip\\}

The third result was proved in~\cite{HKT}. It is a strengthening of another result of Beck~\cite{TTT}. 

\begin{theorem}[\cite{HKT}] \label{th::ClientBESpreventing} 
Let $q$ be a positive integer, let $X$ be a finite set and let $\mathcal{F}$ be a family of subsets of $X$. If
$$
\sum_{A \in \mathcal{F}} \left(\frac{q}{q+1}\right)^{|A|} < 1,
$$
then Client has a winning strategy for the $(1:q)$ Client-Waiter game $(X, \mathcal{F}^*)$.
\end{theorem}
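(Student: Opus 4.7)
The plan is to run the random-strategy argument used for Theorems~\ref{th::ClientBES} and~\ref{th::Oren}. First I would observe that the transversal family $\mathcal{F}^*$ is closed upwards under inclusion, so Client wins the Client-Waiter game on $(X, \mathcal{F}^*)$ if and only if his final set $Y_C$ meets every set $B \in \mathcal{F}$; equivalently, Client loses only if some $B \in \mathcal{F}$ ends up entirely disjoint from $Y_C$ (that is, entirely claimed by Waiter). It therefore suffices to show that, against any fixed strategy of Waiter, the random strategy in which Client keeps one of the offered elements uniformly at random in each round (independently of all prior rounds) misses some $B \in \mathcal{F}$ with probability strictly less than $1$.

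Fix Waiter's strategy and a set $B \in \mathcal{F}$. Enumerate the rounds in which Waiter offers at least one element of $B$; in the $j$-th such round let $t_j$ be the total number of offered elements (so $1 \leq t_j \leq q+1$) and let $k_j \geq 1$ be the number of these that lie in $B$. Conditional on the history up to that round, Client picks outside $B$ with probability $(t_j - k_j)/t_j \leq (q+1-k_j)/(q+1)$, where the inequality uses $t_j \leq q+1$. By Bernoulli's inequality,
\[
\frac{q+1-k_j}{q+1} \;=\; 1 - \frac{k_j}{q+1} \;\leq\; \left(1 - \frac{1}{q+1}\right)^{k_j} \;=\; \left(\frac{q}{q+1}\right)^{k_j}.
\]
Multiplying over all such rounds $j$ and using $\sum_j k_j = |B|$, the probability that $B \cap Y_C = \emptyset$ is at most $(q/(q+1))^{|B|}$.

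Summing over $B \in \mathcal{F}$, the probability that Client fails to hit at least one set of $\mathcal{F}$ is at most $\sum_{B \in \mathcal{F}} (q/(q+1))^{|B|} < 1$. Hence against every fixed Waiter strategy, the random Client strategy produces a transversal $Y_C \in \mathcal{F}^*$ with positive probability. Since the game is finite, deterministic, and with perfect information, this yields (exactly as in the proof of Theorem~\ref{th::ClientBES}) a deterministic winning strategy for Client in the $(1:q)$ Client-Waiter game on $(X, \mathcal{F}^*)$.

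The main point to be careful about is the per-round bound, where Waiter is permitted to offer fewer than $q+1$ elements and may concentrate several elements of a given $B$ into one round. The combined inequality $(t-k)/t \leq (q/(q+1))^k$ for $1 \leq k \leq t \leq q+1$ absorbs both issues cleanly, and it is the only place where the Client-Waiter option of offering a partial round has to be handled.
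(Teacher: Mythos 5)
The paper itself gives no proof of this theorem; it is cited from~\cite{HKT}. Your argument is, however, a complete and correct self-contained proof, and it follows exactly the random-strategy paradigm that the paper uses for Theorems~\ref{th::ClientBES} and~\ref{th::Oren}. The key per-round estimate
\[
\frac{t_j - k_j}{t_j} \;\le\; \frac{q+1-k_j}{q+1} \;\le\; \left(\frac{q}{q+1}\right)^{k_j}, \qquad 1 \le k_j \le t_j \le q+1,
\]
is precisely what is needed both to handle Waiter's right to offer $t_j < q+1$ elements and to allow several elements of a single $B$ to be concentrated in one round; it is also exactly the inequality that would make a derandomized potential-function version of the argument work, so the two routes are essentially equivalent. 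The framing via upward-closure of $\mathcal{F}^*$ is the right reduction (Client wins on $\mathcal{F}^*$ iff $Y_C$ meets every $B\in\mathcal F$), and $\sum_j k_j = |B|$ holds deterministically since every element is offered exactly once. The only place you could tighten is the phrase ``multiplying over all such rounds'': since the number of relevant rounds and the pairs $(t_j,k_j)$ depend on the history, a cleaner formulation is a backward induction on the game tree showing that the conditional probability of missing $B$ given any history $h$ is at most $(q/(q+1))^{|B_h|}$ with $B_h$ the free part of $B$; but the statement as you wrote it is standard and correct. Overall this is the same approach, correctly executed.
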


The fourth result is a rephrased version of Corollary 1.5 from~\cite{Bednarska}.

\begin{theorem}[\cite{Bednarska}] \label{th::WaiterBES}
Let $q$ be a positive integer, let $X$ be a finite set and let ${\mathcal F}$ be a family of subsets of $X$. If
$$
\sum_{A \in {\mathcal F}} 2^{-|A|/(2q-1)} < 1/2 \,,
$$
then Waiter has a winning strategy for the $(1 : q)$ Waiter-Client game $(X, {\mathcal F}^*)$.
\end{theorem}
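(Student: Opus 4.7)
I observe first that Waiter wins the game on $\mathcal{F}^*$ if and only if his final claimed set $E_W$ does not fully contain any $B \in \mathcal{F}$; indeed, $E_C$ is a transversal of $\mathcal{F}$ iff $E_C \cap B \neq \emptyset$ for every $B \in \mathcal{F}$, which (since every board element is eventually claimed) is equivalent to $B \not\subseteq E_W$. So the plan is to equip Waiter with a potential-based strategy preventing him from ever fully claiming any set of $\mathcal{F}$, in the spirit of the Erd\H{o}s--Selfridge strategy for Breaker.

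Set $\beta = 2^{1/(2q-1)}$ and, at any moment during the game, define
\[
\phi(B) = \begin{cases} \beta^{-|B \cap E_F|} & \text{if } B \cap E_C = \emptyset, \\ 0 & \text{otherwise,} \end{cases}
\qquad \Phi = \sum_{B \in \mathcal{F}} \phi(B).
\]
By hypothesis, $\Phi < 1/2$ at the start of the game. When the game ends, $E_F = \emptyset$ and $\Phi$ counts precisely the sets $B \in \mathcal{F}$ with $B \subseteq E_W$. Hence it suffices to design a strategy keeping $\Phi < 1$ throughout, and I will aim for the stronger claim that Waiter can keep $\Phi$ monotonically non-increasing.

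Waiter's strategy is greedy: in each round, offer the $(q+1)$-subset $W \subseteq E_F$ minimizing $\max_{x \in W} \Phi'(W, x)$, where $\Phi'(W,x)$ is the potential after Client takes $x$ and Waiter claims $W \setminus \{x\}$. A direct calculation gives
\[
\Phi'(W,x) - \Phi = -D(x) + \sum_{B \text{ active},\, x \notin B} \phi(B)\bigl(\beta^{|B \cap W|} - 1\bigr),
\]
where $D(y) := \sum_{B \text{ active},\, y \in B} \phi(B)$. The convexity inequality $\beta^{w} - 1 \le \frac{w}{q+1}(\beta^{q+1} - 1)$, valid for $0 \le w \le q+1$, reduces the worst-case $\Phi'(W,x)$ to a clean expression in the marginal weights $D(y)$ alone. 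The exponent $1/(2q-1)$ is calibrated precisely so that $\beta^{q+1} \le 2$ (at least for $q \ge 2$), which is the critical inequality needed to make the decrease from Client's single pick offset the increase from Waiter's $q$ claims; the slack factor $1/2$ in the hypothesis absorbs the remaining losses.

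The main technical obstacle will be exhibiting, in each round, a $(q+1)$-subset $W$ realizing this balance for \emph{every} possible response $x \in W$. Naively picking the $q+1$ elements of largest $D$-value fails, because Client responds with the smallest. A clean route I would try first is a randomized offering: let Waiter offer a uniformly random $W$, bound $\mathbb{E}[\Phi]$ round by round via the same convexity inequality, and then derandomize by the method of conditional expectations. Alternatively, a short exchange argument over pairs of free elements in $E_F$ directly produces a $W$ of the required form. The boundary case $q=1$, where $\beta^{q+1} = 4$ and the preceding inequality degenerates, admits a separate and easier treatment that reduces to the classical Erd\H{o}s--Selfridge criterion applied to the symmetric $(1{:}1)$ Waiter-Client game.
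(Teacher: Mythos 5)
Your reformulation (Waiter winning on $\mathcal{F}^*$ $\iff$ Waiter never fully claims a $B\in\mathcal F$), the potential $\Phi=\sum_B\phi(B)$ with $\phi(B)=\beta^{-|B\cap E_F|}$ on active sets, the boundary values $\Phi_0=\sum_B 2^{-|B|/(2q-1)}<1/2$ and $\Phi_{\mathrm{final}}=|\{B:B\subseteq E_W\}|$, and the round-change identity are all correct. This is the right Erd\H{o}s--Selfridge-style framework, and it is consistent in spirit with Bednarska-Bzd\c{e}ga's weight-function method, which the paper cites rather than reproves. But your write-up is a plan, not a proof, and the acknowledged obstacle --- exhibiting a set $W$ in every round for which the bound holds against \emph{every} response $x\in W$ --- is a genuine gap, not a routine detail.

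Two concrete problems. First, your claim that the exponent ``is calibrated precisely so that $\beta^{q+1}\le 2$'' is not quite the right reading of $2q-1$: you should instead observe that whenever Client's pick $x\notin B$, necessarily $|B\cap W|\le q$ (since $x\in W\setminus B$ and $|W|=q+1$), so the convexity step should be $\beta^w-1\le\frac{w}{q}(\beta^q-1)$ on the range $0\le w\le q$. Then $\beta^q=2^{q/(2q-1)}\le 2$ holds for \emph{all} $q\ge1$ with equality only at $q=1$, which both tightens the estimate and removes any need for a separate $q=1$ case. (As written, your $q=1$ fallback to ``the classical Erd\H{o}s--Selfridge criterion applied to the symmetric $(1{:}1)$ Waiter-Client game'' is circular: that criterion \emph{is} the $q=1$ instance of the theorem you are trying to prove; the Erd\H{o}s--Selfridge theorem itself is a Maker-Breaker statement, not a Waiter-Client one.) Second, and more seriously, with $c=(\beta^q-1)/q$ the per-round requirement becomes $D(x)\ge\frac{c}{1+c}\sum_{y\in W}D(y)$ for every $x\in W$, i.e.\ the minimum $D$-value in $W$ must be at least a $\frac{c}{1+c}$-fraction (roughly $(\sqrt2-1)/q$ for large $q$) of the total over $W$. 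Nothing you have written guarantees such a balanced $W$ of size $q+1$ exists among the free elements: taking the top $q+1$ by $D$-value fails if those values are spread out, and a sliding window of consecutive sorted values need not satisfy the ratio condition either. Your two escape routes are both unconvincing as stated: the ``randomize then derandomize'' idea founders on the fact that Client is adversarial, so you need to control $\mathbb{E}_W[\max_{x\in W}\Phi'(W,x)]$ rather than $\max_x\mathbb{E}_W[\Phi'(W,x)]$, and no bound on that maximum is offered; the ``short exchange argument'' is not described at all. Finally, you invoke the factor $1/2$ of slack to ``absorb the remaining losses'' without any accounting of what the cumulative loss over the whole game is --- you would need to show either that $\Phi$ is non-increasing under some specific rule for $W$, or that the total increase is at most $\Phi_0$, and neither is established. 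Until you specify and verify a concrete rule for choosing $W$ (or an explicit bookkeeping of the total increase), the argument does not yet prove the theorem.
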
 

The rest of this paper is organized as follows: in Section~\ref{sec::minorPlanarity} we prove Theorems~\ref{th::WCminor}, \ref{th::WCminor2} and~\ref{th::CWminor} as well as Corollaries~\ref{cor::WCplanarity} and~\ref{cor::CWplanarity}. In Section~\ref{sec::colorability} we prove Theorems~\ref{th::kColorabilityWC} and~\ref{th::kColorabilityCW}. Finally, in Section~\ref{sec::openprob} we present some open problems and conjectures.

\section{Minor and non-planarity games} \label{sec::minorPlanarity}

In this section we first prove Theorems~\ref{th::WCminor}, \ref{th::WCminor2} and~\ref{th::CWminor} and then deduce Corollaries~\ref{cor::WCplanarity} and~\ref{cor::CWplanarity} as straightforward consequences.  

\bigskip

\noindent \textbf{Proof of Theorem~\ref{th::WCminor}} 
Let $n$ be sufficiently large and let $q \leq (1 - \varepsilon) n$. We describe a strategy for Waiter to force a $K_t$-minor in Client's graph; it is divided into the following three stages (see also Figure 1): 

\bigskip

\noindent \textbf{Stage I:} Let $A \subseteq V(K_n)$ be an arbitrary set of size $\varepsilon n/2$ and let $B = V(K_n) \setminus A$. Offering only edges of $E_{K_n}(B)$ (in a manner that we describe in detail later), Waiter forces Client to build a path $P$ on at least $\varepsilon n/2$ vertices.    

\medskip

\noindent \textbf{Stage II:} Offering only edges of $E_{K_n}(A, V(P))$ (in a manner that we describe in detail later), Waiter forces Client to build a matching $M$ of size at least $\varepsilon^2 n/5$.

\medskip

\noindent \textbf{Stage III:} Let $P$ be split into $t$ consecutive vertex disjoint paths $P_1, \ldots, P_t$, each containing at least $\lfloor \sqrt{n} \rfloor$ endpoints of the matching $M$. For every $1 \leq i \leq t$, let $D_i = \{u \in A : \exists v \in V(P_i) \textrm{ such that } uv \in M\}$. For as long as there exist indices $1 \leq i < j \leq t$ such that $E_{G_C}(D_i, D_j) = \emptyset$, Waiter chooses such indices arbitrarily and offers Client $q+1$ arbitrary edges of $E_{K_n}(D_i, D_j)$. Once $E_{G_C}(D_i, D_j) \neq \emptyset$ for all $1 \leq i < j \leq t$, Waiter plays arbitrarily until the end of the game.   

\bigskip

\begin{figure} %\label{fig::KtMinor}
\begin{center}
\includegraphics[scale=0.85]{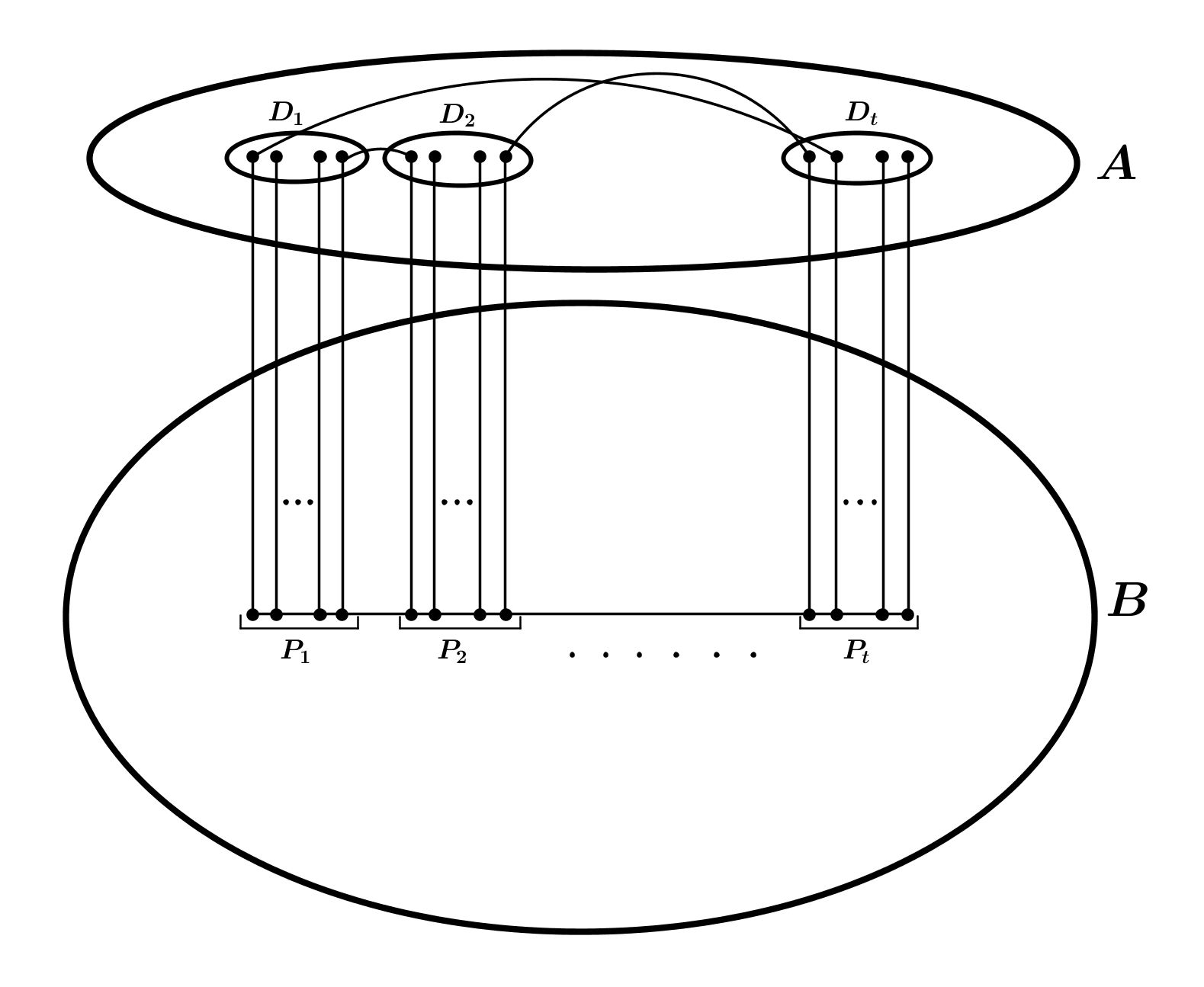}
\end{center}
\caption{An illustration of the graph Waiter is trying to force Client into building.}
\end{figure}

\bigskip

Assuming that Waiter can follow the proposed strategy, by contracting every edge with both endpoints in $V(P_i) \cup D_i$ for every $1 \leq i \leq t$, we obtain a $K_t$. Hence, Client's graph admits a $K_t$-minor as claimed. It thus remains to prove that Waiter can indeed play according to the proposed strategy; we do so for each stage separately.  

\medskip

Since $|B| - q \geq \varepsilon n/2$, the fact that Waiter can follow Stage I of the proposed strategy is an immediate corollary of the following claim.

\begin{claim} \label{cl::longPath}
Playing a $(1 : q)$ Waiter-Client game on $E(K_m)$, Waiter can force Client to build a path on at least $m - q$ vertices.
\end{claim}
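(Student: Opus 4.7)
The plan is to give Waiter a simple greedy path-extension strategy. Waiter will grow a path $v_1 v_2 \cdots v_{\ell}$ in Client's graph one vertex at a time by always extending from the most recently added endpoint. Concretely, Waiter picks an arbitrary starting vertex $v_1$ and, in each round $j \ge 1$, offers Client $q+1$ previously unoffered edges of the form $\{v_j u : u \in U_j\}$ for some set $U_j \subseteq V(K_m) \setminus \{v_1, \ldots, v_j\}$ of size $q+1$. Whichever edge Client claims extends the path by a single new vertex $v_{j+1}$, which then serves as the active endpoint for the next round. Waiter iterates this as long as at least $q+1$ vertices lie outside the current path, that is, for every $j \le m - q - 1$; after that, he plays arbitrarily to finish the game.

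The only thing to verify is that at the start of each round $j$ the edges Waiter wants to offer are indeed free. This will follow from a clean structural observation: every edge offered in an earlier round $i < j$ has the form $v_i w$ with $w \in V(K_m) \setminus \{v_1, \ldots, v_i\}$. Trying to equate such an edge with $v_j u$ (where $u \in V(K_m) \setminus \{v_1, \ldots, v_j\}$) forces either $v_j = v_i$, impossible by distinctness of the path vertices, or $v_j = w$ and $u = v_i$, impossible because $u$ lies off the current path. Hence every edge from $v_j$ to a vertex outside the path is still available at the beginning of round $j$, and there are $m - j \ge q + 1$ such edges by the stopping rule, so $U_j$ can always be chosen.

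After round $m - q - 1$ the path in Client's graph has $m - q$ vertices, which proves the claim; any edges Client is subsequently forced to take only enlarge his graph and do not affect this. The boundary range $q \ge m - 1$ is trivial, since a single arbitrary vertex already forms a path on at most $m - q \le 1$ vertex. I do not foresee any real obstacle here, and the argument does not need to invoke any of the earlier lemmas (Theorems~\ref{th::ClientBES}, \ref{th::Oren}, \ref{th::ClientBESpreventing} or \ref{th::WaiterBES}).
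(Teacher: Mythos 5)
Your proposal is correct and follows essentially the same greedy path-extension strategy as the paper's proof: Waiter always offers $q+1$ edges from the current endpoint to vertices off the path, and the stopping condition $m-j \ge q+1$ immediately yields a path on at least $m-q$ vertices. The extra verification that the offered edges are always free (by noting that every previously offered edge has at least one endpoint on the current path) is a detail the paper leaves implicit but is correct and worth spelling out.
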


\begin{proof}
We describe a strategy for Waiter. For every positive integer $i$, let $P_i$ denote Client's path immediately before the $i$th round; in particular, let $P_1 = x_1$, where $x_1 \in V(K_m)$ is an arbitrary vertex. Assume that $P_i = (x_1, \ldots, x_i)$ holds for some $i \geq 1$. If possible, in the $i$th round, Waiter offers Client the edges of $\{x_i y_j : 1 \leq j \leq q+1\}$, where $y_1, \ldots, y_{q+1}$ are arbitrary vertices of $V(K_m) \setminus V(P_i)$. By claiming any one of these edges, Client creates a path $P_{i+1} = (x_1, \ldots, x_i, x_{i+1})$. Once this is no longer possible, we must have $m - i = |V(K_m) \setminus V(P_i)| < q+1$, entailing $i \geq m - q$.
\end{proof} 

Next, we prove that Waiter can follow Stage II of the proposed strategy. Note that, by the description of Stage I, all edges of $E_{K_n}(A, V(P))$ are free at the beginning of Stage II. For as long as possible, in each round of this stage, Waiter offers Client $q+1$ arbitrary edges, which are disjoint from any edge Client has previously claimed in Stage II. It is thus evident that the graph Client builds in this stage is a matching; it remains to prove that it contains at least $\varepsilon^2 n/5$ edges. Suppose for a contradiction that, when following this strategy, Waiter can only force a matching of size $r < \varepsilon^2 n/5$ in Client's graph. Since Waiter cannot further enlarge Client's matching, it follows that he does not have enough edges to offer in accordance with Stage II of the proposed strategy. In particular, 
\begin{equation} \label{eq::smallMatching}
(\varepsilon n/2 - r)^2 - q r < q+1 \,.
\end{equation}

\noindent However, by the assumed lower bound on $\varepsilon$ we have
$$
(\varepsilon n/2 - r)^2 - q r > (\varepsilon n/2 - \varepsilon^2 n/5)^2 - \varepsilon^2 n^2 (1 - \varepsilon)/5 \geq \varepsilon^2 n^2/20 \geq (1 - \varepsilon) n + 1 \geq q+1 \,,
$$  
contrary to~\eqref{eq::smallMatching}. 

\medskip

Finally, we prove that Waiter can play according to Stage III of the proposed strategy. It follows by Stage II and by the assumed upper bound on $t$ that $|M| \geq \varepsilon^2 n/5 \geq t \sqrt{n}$. Therefore, $P$ can indeed be split into $t$ consecutive vertex disjoint paths $P_1, \ldots, P_t$, each containing at least $\lfloor \sqrt{n} \rfloor$ endpoints of $M$. By definition, $|D_i| \geq \lfloor \sqrt{n} \rfloor$ holds for every $1 \leq i \leq t$. Therefore, $|D_i||D_j| \geq (1 - \varepsilon) n + 1 \geq q+1$ holds for all $1 \leq i < j \leq t$. Since, by the description of Stages I and II, all edges of $E_{K_n}(D_i, D_j)$ are free at the beginning of Stage III, it follows that Waiter can ensure that Client will claim an edge of $E_{K_n}(D_i, D_j)$ for all $1 \leq i < j \leq t$.

Next, assume that $q \geq n + \eta$. Let ${\mathcal F}_1$ denote the family of edge-sets of cycles of $K_n$ of length at least $\sqrt[3]{n}/2$. Then
\begin{eqnarray} \label{eq::phi1}
\Phi({\mathcal F}_1) &=& \sum_{A \in {\mathcal F}_1} (q+1)^{-|A|} = \sum_{k = \sqrt[3]{n}/2}^n \binom{n}{k} \frac{(k-1)!}{2} (q+1)^{-k} < \sum_{k = \sqrt[3]{n}/2}^{\infty} \frac{1}{k} \left(\frac{n}{q}\right)^k \nonumber \\ 
&<& \left(\frac{n}{q} \right)^{\sqrt[3]{n}/2 - 1} \sum_{k=1}^{\infty} \frac{1}{k} \left(\frac{n}{q}\right)^k = \left(\frac{n}{q} \right)^{\sqrt[3]{n}/2 - 1} \log \left(\frac{q}{q-n} \right) \nonumber \\
&\leq& \left(\frac{n}{n + n^{2/3} \log n} \right)^{\sqrt[3]{n}/2 - 1} \log \left(\frac{n + n^{2/3} \log n}{n^{2/3} \log n} \right) \leq \exp \left\{- \frac{(\sqrt[3]{n}/2 - 1) n^{2/3} \log n}{n + n^{2/3} \log n} \right\} \cdot \log n \nonumber \\ 
&=& o(1) \,,
\end{eqnarray}
where the third equality follows from the Taylor expansion $- \log (1-x) = \sum_{k=1}^{\infty} x^k/k$.

\medskip

Let ${\mathcal F}_2$ denote the family of edge-sets of all pairs of cycles $(C_1, C_2)$ of $K_n$, such that $|C_1| = \ell_1$, $|C_2| = \ell_2$, $\ell_2 \leq \ell_1 \leq \sqrt[3]{n}/2$, and $C_1 \cap C_2$ is a path on $s \geq 1$ vertices. Then 
\begin{eqnarray} \label{eq::phi2}
\Phi({\mathcal F}_2) &=& \sum_{A \in {\mathcal F}_2} (q+1)^{-|A|} \leq \sum_{\ell_1 = 3}^{\sqrt[3]{n}/2} \sum_{\ell_2 = 3}^{\ell_1} \sum_{s=1}^{\ell_2} \binom{n}{\ell_1} \frac{(\ell_1 - 1)!}{2} \cdot \ell_1 \cdot (n)_{\ell_2 - s} \cdot (n + \eta)^{- (\ell_1 + \ell_2 - s + 1)} \nonumber \\
&\leq& \sum_{\ell_1 = 3}^{\sqrt[3]{n}/2} \sum_{\ell_2 = 3}^{\ell_1} \sum_{s=1}^{\ell_2} 1/n \leq (\sqrt[3]{n}/2)^3 \cdot n^{-1} = 1/8 \,.  
\end{eqnarray}  

Let ${\mathcal F} = {\mathcal F}_1 \cup {\mathcal F}_2$. Combining~\eqref{eq::phi1} and~\eqref{eq::phi2} we conclude that $\Phi({\mathcal F}) < 1$. It thus follows by Theorem~\ref{th::ClientBES} that Client has a strategy to build a graph $G_C$ such that, if $C_1$ and $C_2$ are cycles of $G_C$, then $V(C_1) \cap V(C_2) = \emptyset$. It is easy to see that a graph with no pair of intersecting cycles is $K_4$-minor free. 
{\hfill $\Box$ \medskip\\}

\noindent \textbf{Proof of Theorem~\ref{th::WCminor2}}
Let $n, t$ and $\alpha$ be as in the statement of the theorem. If a graph admits a $K_t$-minor, it must contain at least 
\begin{equation*}
\sum_{k=3}^t \binom{t}{k} \frac{(k-1)!}{2} \geq \frac{(t-1)!}{2} \geq (c_1 t)^t \geq e^{c_2 t \log t} 
\end{equation*}
cycles, where $c_1$ and $c_2$ are positive constants. It is therefore sufficient to show that Client has a strategy to avoid building $e^{c_2 t\log t}$ cycles.

\medskip

Let $\mathcal{F} = \{E(C) : C \textrm{ is a cycle of } K_n\}$. Then
\begin{align*} 
\Phi(\mathcal{F}) &= \sum_{A \in \mathcal{F}} (q+1)^{-|A|} = \sum_{k=3}^n \binom{n}{k} \frac{(k-1)!}{2} (q+1)^{-k} \leq \sum_{k=3}^n \frac{1}{k} \left(\frac{n}{n - \alpha}\right)^k \nonumber \\
&\leq \sum_{k=1}^n \frac{e^{\alpha k/(n-\alpha)}}{k} \leq e^{(1+o(1)) \alpha} \sum_{k=1}^n \frac{1}{k} \leq e^{(1+o(1)) \alpha} (\log n + 1) < e^{c_2 t \log t} \,,   
\end{align*}
where the third inequality holds since $\alpha = o(n)$ and the last inequality holds by the assumed bounds on $t$ and $\alpha$.

It thus follows by Theorem~\ref{th::ClientBES} that Client has a strategy to avoid fully claiming $e^{c_2 t \log t}$ cycles. This concludes the proof of the theorem.
{\hfill $\Box$ \medskip\\}

In the proof of Theorem~\ref{th::CWminor}, we will make use of the following two results.
\begin{proposition} [\cite{HKSSpcm}, Lemma 4.8] \label{prop::girthMinor}
Let $G$ be a graph with average degree $2 + \alpha$, for some $\alpha > 0$, and girth $g \geq (1 + 2/\alpha) (4 \log_2 t + 2 \log_2 \log_2 t + c')$, where $c'$ is an absolute constant (i.e., independent of $t$ and $\alpha$). Then $G$ admits a $K_t$-minor.
\end{proposition}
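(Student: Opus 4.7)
The plan is to realize a $K_t$-minor inside a large BFS ball, exploiting the girth hypothesis to keep the ball tree-like and the average-degree hypothesis to supply the cross-edges between branch sets. The argument proceeds in four steps.

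\textbf{Step 1 (Reduce to $\delta \geq 3$).} I would first pass to a minor $H$ of $G$ by iteratively deleting vertices of degree at most $1$ and then smoothing (suppressing) every vertex of degree exactly $2$. Both operations are minor-preserving and each preserves $e - v$, so $e(H) - v(H) \geq \alpha n/2$ and $\delta(H) \geq 3$. Smoothing can shrink cycles, but since each edge of $H$ corresponds to a path of length at most $L$ in $G$, the girth of $H$ is at least $\mathrm{girth}(G)/L$. The factor $(1 + 2/\alpha)$ in the girth hypothesis is calibrated precisely to absorb this loss, either by bounding $L$ via an averaging argument or, alternatively, by splitting into cases on the size of $v(H)$: when $v(H)$ is small, $H$ already has average degree $\gg t\sqrt{\log t}$ and a standard Kostochka--Thomason type bound delivers the minor immediately.

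\textbf{Step 2 (Grow a BFS tree).} Pick any $v \in V(H)$ and run BFS from $v$ to depth $r := \lfloor (\mathrm{girth}(H) - 1)/2 \rfloor$. Because $2r + 1 \leq \mathrm{girth}(H)$, the induced subgraph on the ball $B_r(v)$ is a tree $T$, and every edge of $H$ whose endpoints both lie at depth $\leq r - 1$ belongs to $T$. Since $\delta(H) \geq 3$, the tree $T$ carries at least $3 \cdot 2^{r-1}$ leaves, i.e.\ vertices at depth exactly $r$.

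\textbf{Step 3 (Extract $t$ disjoint branch sets).} Partition the leaves of $T$ according to their ancestor at depth $s \approx r - \lceil \log_2 t \rceil$; a pigeonhole argument produces $t$ depth-$s$ vertices whose subtrees each contain $\Omega(2^{r-s}) = \Omega(2^{r}/t)$ leaves of $T$. Let $B_i$ be the $i$-th such subtree; then $B_1, \ldots, B_t$ are pairwise disjoint and each is connected in $H$.

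\textbf{Step 4 (Install a cross-edge for every pair).} Since $\delta(H) \geq 3$ and the induced subgraph on $B_r(v)$ equals $T$, every leaf of $T$ has at least two neighbors outside $B_{r-1}(v)$. These external half-edges, together with the $\geq \alpha n/2$ excess edges of $H$, form a reservoir that is large compared to the $\binom{t}{2}$ pairs to be connected. A greedy or union-bound argument then produces an edge of $H$ from $B_i$ to $B_j$ for every pair $i < j$, possibly after enlarging $B_i$ and $B_j$ by appending disjoint short paths through $H \setminus B_{r-1}(v)$. This yields the desired $K_t$-minor.

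The main obstacle is Step 4: the pairwise edges must be secured simultaneously and without conflict across all $\binom{t}{2}$ pairs. The parameters work because each branch set has $\Omega(2^r/t)$ candidate external endpoints, which for $r \geq 2 \log_2 t + O(\log \log t)$ dominates $\binom{t}{2} = O(t^2)$. This matches the hypothesis $g \geq (1 + 2/\alpha)(4 \log_2 t + 2 \log_2 \log_2 t + c')$ after the girth loss of Step~1: the factor $4 \log_2 t$ buys the BFS radius, the $\log \log t$ term supplies the union-bound slack, and the prefactor $1 + 2/\alpha$ covers the cycle shrinkage incurred by smoothing.
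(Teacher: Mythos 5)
Your Step~4 is a genuine gap, and the calibration you offer to justify it does not hold up. By your own construction the $t$ branch sets $B_1,\ldots,B_t$ are subtrees of the BFS tree $T$, and you have arranged that the ball $B_r(v)$ induces exactly $T$ in $H$. Consequently there is \emph{not a single edge of $H$} between any two of your branch sets: all edges internal to $B_r(v)$ are tree edges, and the tree has no edge between two distinct depth-$s$ subtrees other than through the common ancestor, which you have excluded. So every one of the $\binom{t}{2}$ required connections must be routed through $H \setminus B_r(v)$, and you must find $\binom{t}{2}$ pairwise \emph{internally disjoint} such paths, each of which you would then absorb into $B_i$ or $B_j$. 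Counting external half-edges is not enough: the $\Omega(2^r/t)$ half-edges leaving a branch set say nothing about where they land, and there is no probability space over which to run a ``union-bound argument'' --- $H$ is a fixed graph. Moreover your own numbers do not close: with $r \approx 2\log_2 t + O(\log\log t)$ you have $2^r/t \approx t\,\mathrm{polylog}(t)$ external endpoints per branch set, which is smaller than $\binom{t}{2} = \Theta(t^2)$, not larger; so even the heuristic estimate goes the wrong way. The hard part of the theorem is precisely this routing step, and a greedy sketch cannot carry it.

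For comparison, the proof cited in~\cite{HKSSpcm} avoids the issue entirely by factoring through two standard black boxes rather than constructing branch sets by hand. After your Step~1 (reduce to $\delta \geq 3$ by deleting low-degree vertices and suppressing degree-$2$ vertices, paying the $1 + 2/\alpha$ girth factor via the averaging argument you allude to), one applies the Kühn--Osthus theorem on minors in graphs of large girth, which converts minimum degree $3$ and girth $g$ into a \emph{minor of minimum degree $\approx 2^{g/4}$}, and then the Kostochka--Thomason bound, which converts average degree $d$ into a $K_{\Omega(d/\sqrt{\log d})}$-minor. Substituting $g = 4\log_2 t + 2\log_2\log_2 t + c'$ gives exactly a $K_t$-minor, which explains where the $4\log_2 t$ and the $2\log_2\log_2 t$ come from: the first is the exponential girth-to-degree conversion, the second cancels the $\sqrt{\log d}$ loss in Kostochka--Thomason. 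If you want to prove this from scratch along BFS lines, you would need to replicate the expansion and path-system machinery inside the Kühn--Osthus argument; the present sketch does not do so.
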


\begin{theorem}[\cite{BHKL}, Theorem 1.3]
\label{th::connected}
For every integer $n\geq 4$, Waiter can force Client to build a connected graph when playing a $(1:q)$ Waiter-Client game on $E(K_n)$ if and only if $q\leq\lfloor n/2\rfloor -1$.
\end{theorem}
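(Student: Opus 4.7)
The plan is to prove the two implications separately and combine them via the bias monotonicity of Waiter-Client games.

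\textbf{Waiter wins when $q \le \lfloor n/2 \rfloor - 1$.} I would have Waiter force Client to claim a spanning tree of $K_n$. The strategy maintains the invariant that, after round $r$, Client's graph $G_C$ contains a tree $T_r$ on $r+1$ vertices. Initially $T_0$ is a single vertex $v_0$. In round $r+1$, Waiter chooses a vertex $x_{r+1} \in V(T_r)$ and offers $q+1$ free edges from $x_{r+1}$ to $V \setminus V(T_r)$; any such offer forces Client to extend $T_r$. After $n-1$ rounds, $T_{n-1}$ spans $V$ and $G_C$ is connected. The key point is feasibility: one needs that at every step some $x \in V(T_r)$ has at least $q+1$ free neighbours outside $V(T_r)$. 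I would prove this by an averaging/double-counting argument, using that the total cut size $|V(T_r)| \cdot (n - |V(T_r)|)$ is always at least $n - 1$, that Waiter has claimed only $r q$ edges total by round $r$, and that $q + 1 \le \lfloor n/2 \rfloor$. Getting this accounting to work throughout, including late rounds when $|V(T_r)|$ is close to $n$, is the main technical task of this direction.

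\textbf{Client wins when $q \ge \lfloor n/2 \rfloor$.} I would have Client keep at least one vertex isolated in $G_C$. The natural idea is a pairing strategy: fix a perfect (or near-perfect) matching $\pi$ on $V$, and have Client respond to each offered edge using a pre-specified pairing on $E(K_n)$ induced by $\pi$. For instance, pair each cross edge $\{u, w\}$ with its twin $\{\pi(u), w\}$, and pair intra-pair edges with themselves. Client's rule is to always accept an offered edge whose paired twin has already been claimed, so that some $\pi$-pair remains \emph{clean} --- meaning neither vertex of the pair has an incident Client edge --- throughout the game. Since $q + 1 \ge \lfloor n/2 \rfloor + 1$ exceeds the number of $\pi$-pairs, a pigeonhole-type argument on how the offered edges distribute across the pairs forces Client to have a safe choice in every round. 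As long as one clean pair survives, $G_C$ contains an isolated vertex and is disconnected.

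\textbf{Main obstacle.} I expect the Client direction to be the harder one: designing an explicit edge pairing on $E(K_n)$, together with a deterministic response rule, so that the invariant ``some $\pi$-pair is clean'' is provably preserved by every round --- particularly in the final rounds, when most of $E(K_n)$ has been distributed and only a constrained subset of edges remains free --- is the crux. One could alternatively try a potential-function attack via Theorem~\ref{th::ClientBES}, but since Client wants to force certain sets into \emph{Waiter's} graph rather than avoid claiming them himself, the direct pairing/strategy-stealing route is the cleaner approach.
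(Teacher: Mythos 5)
The statement you are proving is imported verbatim from~\cite{BHKL} and is not proved in this paper, so there is no internal proof to compare against; I assess your plan on its own merits. Both directions, as sketched, have serious problems, though of very different kinds.

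\textbf{The Waiter direction has a structural flaw, not an accounting gap.} You propose that in round $r+1$ Waiter offers $q+1$ free edges from a \emph{single} vertex $x_{r+1} \in V(T_r)$ to $V \setminus V(T_r)$, and you locate the difficulty in verifying an averaging bound. But since all offered edges emanate from one vertex of the simple graph $K_n$, they must go to $q+1$ \emph{distinct} vertices of $V \setminus V(T_r)$, so this offer is possible only while $n - |V(T_r)| \geq q+1$. Hence the tree is capped at $n-q$ vertices --- precisely why Claim~\ref{cl::longPath}, whose proof is exactly this strategy, yields a path on $m-q$ vertices and not a Hamilton path. For $q = \lfloor n/2 \rfloor - 1$ this stalls around $\lceil n/2 \rceil + 1$ vertices, far short of spanning, and no double-counting of the total cut $|V(T_r)|(n-|V(T_r)|) \geq n-1$ can rescue a round in which there literally are not $q+1$ legal edges to offer from any single tree vertex. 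The fix is to offer $q+1$ free \emph{cut} edges with varying inside endpoints, but then the naive count (free cut edges $\geq (r+1)(n-r-1) - rq$) also fails for late rounds, and one must argue that Waiter can husband the cut so that enough free cut edges survive until the last outside vertex is swallowed; even for $n=4$, $q=1$ a careless round-2 offer lets Client strand a vertex. That bookkeeping is the real content of this direction and your plan does not contain it.

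\textbf{The Client direction requires no strategy at all, and the pairing you sketch is under-specified.} In a $(1:q)$ Waiter-Client game Waiter must offer exactly $q+1$ free elements each round (and in a short final round Client receives $\max\{0,\, t-q\} = 0$ of the $t \leq q$ leftovers), so at the end of the game $e(G_C) = \lfloor \binom{n}{2}/(q+1) \rfloor$ regardless of play. For $q \geq \lfloor n/2 \rfloor$ and $n \geq 4$ this is at most $n-2$: writing $n(n-1) = (n+2)(n-3)+6$ for even $n$ and $n(n-1)=(n+1)(n-2)+2$ for odd $n$ shows $\lfloor \binom{n}{2}/(\lfloor n/2 \rfloor + 1)\rfloor \leq n-2 < n-1$. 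A graph on $n$ vertices with fewer than $n-1$ edges is disconnected, so Client wins automatically. Your pairing argument, apart from being unnecessary, is also not a proof: the rule $\{u,w\} \mapsto \{\pi(u),w\}$ is not well-defined (which endpoint do you move?), and the claimed invariant that some $\pi$-pair remains clean is not established merely from $q+1$ exceeding the number of pairs --- you would need to show that in every round some offered edge has both endpoints in already-dirty pairs, which is not obvious and is the entire difficulty you wave at in your ``main obstacle'' paragraph.
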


\noindent \textbf{Proof of Theorem~\ref{th::CWminor}} 
Assume first that $q \geq n/2 - 1$. We consider two cases according to the parity of $n$. If $n$ is even, then by monotonicity, and since $n/2$ is an integer, we may assume that $q = n/2 - 1$. Note that $\binom{n}{2}/(q+1) = n-1$; in particular, Waiter offers exactly $q+1$ edges in each round of the game. By Theorem~\ref{th::connected}, Waiter has a strategy to force Client to build a connected graph. Since, moreover, $e(G_C) = n-1$ at the end of the game, it must be a spanning tree which is $K_3$-minor free.   

Assume then that $n$ is odd. By monotonicity, and since $q$ is an integer, we may assume that $q = (n+1)/2 - 1$. By Theorem~\ref{th::connected}, Waiter has a strategy to force Client to build a connected graph when playing on $E(K_{n+1})$; let ${\mathcal S}$ be such a strategy. We present a strategy ${\mathcal S}'$ for Waiter to force Client to build a $K_3$-minor free graph when playing on $E(K_n)$. Waiter pretends the board is $E(K_{n+1})$, i.e., in his mind he adds an imaginary vertex and $n$ imaginary edges, and follows ${\mathcal S}$. If in some round he is instructed by ${\mathcal S}$ to offer only imaginary edges, then he pretends that he did, and then he chooses one of these edges arbitrarily and pretends that Client claimed it. If in some round he is instructed by ${\mathcal S}$ to offer at least one imaginary edge and at least one real edge (i.e., an edge which is actually on the board $E(K_n)$), then he offers only the real edges (recall that in a Client-Waiter game, Waiter is allowed to offer less board elements than his bias specifies) but pretends he offered all edges ${\mathcal S}$ instructed him to claim. In every other round he plays precisely as ${\mathcal S}$ instructs him to. Since, in his mind, Waiter followed ${\mathcal S}$ exactly and since ${\mathcal S}$ is a winning strategy for the game on $E(K_{n+1})$, in Waiter's mind, at the end of the game, Client's graph is a spanning tree of $K_{n+1}$. Hence, at the end of the game, $G_C$ is a forest and thus $K_3$-minor free.

Now, suppose that $q \leq (1/2 - \varepsilon) n$; by monotonicity we can in fact assume that $q = \lfloor (1/2 - \varepsilon) n \rfloor$. Let $\alpha$ be a constant satisfying 
$$
\left \lfloor \frac{\binom{n}{2}}{\lfloor (1/2 - \varepsilon) n \rfloor + 1} \right\rfloor \geq (1 + \alpha) n.
$$ 
Note that $\alpha \geq \frac{\varepsilon}{1 - \varepsilon}$. Let $k = \lfloor \log_3 (\alpha n/4) \rfloor$ and let ${\mathcal F}_k$ denote the family of edge-sets of all cycles of $K_n$ whose length is strictly smaller than $k$. Then
\begin{eqnarray*} \label{eq::largeGirth}
\Phi({\mathcal F}_k) &=& \sum_{A \in {\mathcal F}_k} (q+1)^{-|A|} = \sum_{s = 3}^{k-1} \binom{n}{s} \frac{(s-1)!}{2} (q+1)^{-s} < \sum_{s = 3}^{k-1} \left(\frac{n}{\lfloor (1/2 - \varepsilon) n \rfloor}\right)^s \nonumber \\
&<& \sum_{s = 3}^{k-1} 3^s < 3^k \leq \alpha n/4 \,.
\end{eqnarray*}  
Using Theorem~\ref{th::Oren} we infer that Client has a strategy to build a graph $G_C$ which contains a subgraph $H_C$ with at least $(1 + \alpha) n$ edges and fewer than $\alpha n/2$ cycles of length at most $k-1$. Deleting one edge from each such cycle, results in a graph $H$ with average degree at least $2 + \alpha$ and with girth at least $k$. Let $t$ be the largest integer for which $(1 + 2/\alpha) (4 \log_2 t + 2 \log_2 \log_2 t + c') \leq k$; it is easy to see that there exists a constant $c > 0$ such that $t \geq (\varepsilon n)^{c \varepsilon}$. It follows from Proposition~\ref{prop::girthMinor} that $H$ admits a $K_t$-minor. Clearly, $G_C$ admits the same minor.
{\hfill $\Box$ \medskip\\}

\noindent \textbf{Proof of Corollary~\ref{cor::WCplanarity}}
Assume first that $q \leq (1 - \varepsilon) n$. If $\varepsilon \geq 5 n^{- 1/4}$, then $\varepsilon^2 \sqrt{n}/5 \geq 5$ and thus it follows by Theorem~\ref{th::WCminor} that Waiter can force Client's graph to admit a $K_5$-minor; Client's graph is then non-planar. Assume then that $q \geq n + \eta$, where $\eta = \eta(n) \geq n^{2/3} \log n$. It follows by Theorem~\ref{th::WCminor} that Client has a strategy to keep his graph $K_4$-minor free. It is easy to see that every subdivision of both $K_5$ and $K_{3,3}$ admits a $K_4$-minor and thus Client's graph is planar by Kuratowski's Theorem (see, e.g.,~\cite{West}).    
{\hfill $\Box$ \medskip\\}

\noindent \textbf{Proof of Corollary~\ref{cor::CWplanarity}}
Assume first that $q \geq \lceil n/2 \rceil - 1$. It follows by Theorem~\ref{th::CWminor} that Waiter has a strategy to force Client to build a $K_3$-minor free graph; such a graph is, in particular, planar. Assume then that $q \leq n/2 - c n/ \log n$. Then $q \leq (1/2 - \varepsilon) n$, where $\varepsilon = c/ \log n$. For a sufficiently large constant $c$, it follows by Theorem~\ref{th::CWminor} that Client has a strategy to build a graph which admits a $K_5$-minor and is thus non-planar.      
{\hfill $\Box$ \medskip\\}

\section{Colorability games} \label{sec::colorability}

In this section we prove Theorems~\ref{th::kColorabilityWC} and~\ref{th::kColorabilityCW}. In both proofs we will make use of the following well-known result:

\begin{theorem} [\cite{Kim}] \label{th::girth5}
Let $G$ be a graph with maximum degree $\Delta$ and girth at least $5$. Then
$$
\chi(G) \leq (1 + \nu(\Delta)) \Delta/\log \Delta \,,
$$
where $\nu(\Delta)$ is a function which tends to zero as $\Delta$ tends to infinity. 
\end{theorem}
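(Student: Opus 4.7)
The plan is to use the semi-random (\emph{nibble}) method of Rödl, refined in the form exploited by Kim. I would iteratively assign colors to vertices in rounds so that after $O(\log \Delta)$ rounds every vertex is colored, and the total palette used has size $k = (1+o(1))\Delta/\log\Delta$. Throughout, each still-uncolored vertex $v$ carries a list $L_v$ of available colors and a current degree $d_v$ in the uncolored subgraph; the key invariant to maintain is that $|L_v|/d_v \ge (1+o(1))/\log\Delta$ at every stage.

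A single round proceeds as follows: every uncolored $v$ picks a tentative color $c_v \in L_v$ uniformly and independently, and $v$ is permanently colored $c_v$ if and only if no neighbor of $v$ simultaneously picks $c_v$. The girth-$5$ hypothesis enters in two decisive ways. First, the absence of triangles means $N(v)$ is independent, so the neighbors of $v$ never conflict \emph{with each other} while we are analysing whether $v$ itself survives. Second, the absence of $4$-cycles means that any two vertices of $N(v)$ share no common neighbor besides $v$, so the tentative colors picked by distinct neighbors depend on essentially disjoint portions of the random input. These two facts let one compute, up to $(1+o(1))$ factors, both the probability that a given $c \in L_v$ survives (which is roughly $1-p$ for a suitable constant $p$) and the probability that $v$ successfully gains a color (which is bounded away from $0$). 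Consequently $|L_v|$ and $d_v$ each shrink by roughly the same factor $1-p$, so the ratio $|L_v|/d_v$ is preserved to leading order.

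After about $\log\Delta / \log(1/(1-p))$ rounds, the residual graph has maximum degree much smaller than each remaining palette, and one can finish by a routine application of the Lovász Local Lemma (or a greedy argument in the palette-rich regime). Summing the colors introduced across all rounds, together with those needed at the finish, yields the claimed bound $\chi(G)\le (1+\nu(\Delta))\Delta/\log\Delta$.

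The main obstacle, as in every nibble argument, is \emph{concentration}. The number of colors surviving in $L_v$ and the number of newly-colored neighbors of $v$ are complicated functions of the tentative choices made in the current round, and the invariant must hold simultaneously for every uncolored vertex across $\Theta(\log\Delta)$ rounds. I would establish the required concentration using Talagrand's inequality (or bounded-difference martingales) applied to these statistics, with the Lipschitz and certificate parameters controlled by the local tree-like structure forced by girth $5$. Precisely this structural sparsity is what reduces the dependence among the many random inputs enough to drive the per-vertex failure probability below any inverse polynomial, so that a union bound over all vertices and all rounds goes through; without girth $5$, neighborhoods would overlap too heavily for such concentration and the argument would break down.
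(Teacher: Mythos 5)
This theorem is cited from Kim's 1995 paper and is used as a black box in the present paper; no proof of it appears here, so there is no ``paper's own proof'' for your sketch to diverge from. What you have written is a reasonable high-level outline of Kim's actual semi-random (nibble) argument, and you correctly identify the two roles of the girth-$5$ hypothesis (triangle-freeness makes each neighborhood an independent set, $C_4$-freeness bounds codegrees so that the randomness feeding distinct neighbors is almost disjoint), the need for a per-round invariant, concentration via Talagrand or a martingale inequality, and a Local Lemma or greedy finish once lists dominate residual degrees.

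One substantive imprecision is worth flagging: you claim that $|L_v|$ and $d_v$ both shrink by a factor ``roughly $1-p$,'' so the ratio is ``preserved to leading order.'' If the ratio were genuinely preserved you would never reach the palette-rich regime needed to finish. The engine of Kim's proof is that the decay rates differ: list sizes contract like $e^{-p}$ per round while residual degrees contract like $1-p$, and since $e^{-p} > 1-p$ the ratio $|L_v|/d_v$ strictly improves each round. This small multiplicative gap, compounded over $\Theta(\log\Delta)$ rounds, is exactly what gets the residual degree below the list size so that the final clean-up is possible. (The mechanism behind the gap is the ``coupon-collector'' saving: two neighbors of $v$ may retain the same color, which costs $v$ two units of degree but only one color from its list --- and girth $5$, by making $N(v)$ independent, is what permits two neighbors to share a color at all.) Your sketch also describes every uncolored vertex participating in each round; in a clean nibble analysis one activates only a small $\epsilon$-fraction per round, which is what keeps the per-round changes small enough for the concentration and union-bound steps to close. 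Neither point wrecks the outline, but both are where the technical weight of Kim's proof actually sits, and a complete write-up would have to confront them explicitly.
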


\bigskip

\noindent \textbf{Proof of Theorem~\ref{th::kColorabilityWC}}
Let $q \geq (8e + \alpha) n/(k \log k)$ be an integer and let $\nu$ be the function appearing in the statement of Theorem~\ref{th::girth5}. Fix an arbitrarily small constant $\varepsilon > 0$ and let $k_0$ be the smallest integer such that $\log \log k_0 \geq \log 3 - \log (1 - \varepsilon)$ and $\nu((1 - \varepsilon) k \log k/3) \leq \varepsilon$ holds for every $k \geq k_0$. Assume first that $k \geq \max \{k_0, 1000\}$. Client's strategy is based on Theorem~\ref{th::ClientBES}. In order to present it we first consider several sums. 

Let ${\mathcal F}_1 = \{E(C_1) \cup E(C_2) : C_1 \textrm{ and } C_2 \textrm{ are cycles of } K_n, \; |C_1|,|C_2|\in\{3,4\} \textrm{ and } V(C_1) \cap V(C_2) \neq \emptyset\}$. Then
\begin{eqnarray} \label{eq::intersectingTriangles}
\Phi({\mathcal F}_1) &=& \sum_{A \in {\mathcal F}_1} (q+1)^{- |A|} \leq n^4 \left(\frac{k \log k}{(8e + \alpha) n} \right)^5 + 3 n^5 \left(\frac{k \log k}{(8e + \alpha) n} \right)^6 + 2 n^6 \left(\frac{k \log k}{(8e + \alpha) n} \right)^7 \nonumber \\
&+& n^7 \left(\frac{k \log k}{(8e + \alpha) n} \right)^8 = o(1) \,. 
\end{eqnarray}   

Let ${\mathcal F}_2 = \{F : \exists S \subseteq V(K_n) \textrm{ such that } S \neq \emptyset, \; F \subseteq E_{K_n}(S) \textrm{ and } |F| = |S| k \log k/16\}$. Then
\begin{eqnarray} \label{eq::manyEdges} 
\Phi({\mathcal F}_2) &=& \sum_{A \in {\mathcal F}_2} (q+1)^{- |A|} \leq \sum_{t=1}^n \binom{n}{t} \binom{\binom{t}{2}}{t k \log k/16} (q+1)^{- t k \log k/16} \nonumber \\
&\leq& \sum_{t=1}^n \left[\frac{e n}{t} \left(\frac{8 e t}{k \log k} \cdot \frac{k \log k}{(8e + \alpha) n} \right)^{k \log k/16} \right]^t \leq \sum_{t=1}^n \left[e \left(\frac{8 e}{8e + \alpha} \right)^{k \log k/16} \right]^t \nonumber \\
&<& 1/3 \,,
\end{eqnarray}
where the third inequality holds since $k$ is assumed to be sufficiently large and the last inequality holds for an appropriately chosen $\alpha = \alpha(k)$; it is not hard to see that $\alpha$ can be chosen such that it tends to zero as $k$ tends to infinity. 

Let ${\mathcal F}_3 = \{F \cup F' : \exists S \subseteq V(K_n) \textrm{ such that } S \neq \emptyset, \; F \subseteq E_{K_n}(S), \; F' \subseteq E_{K_n}(S, V(K_n) \setminus S), \; |F| = |S| k/6 \textrm{ and } |F'| = |S| k \log k/8\}$. Then
\begin{eqnarray} \label{eq::largeMinDeg}
\Phi({\mathcal F}_3) &=& \sum_{A \in {\mathcal F}_3} (q+1)^{- |A|} \leq \sum_{t=1}^n \binom{n}{t} \binom{\binom{t}{2}}{t k/6} \binom{t(n-t)}{t k \log k/8} (q+1)^{- (t k/6 + t k \log k/8)} \nonumber \\ 
&\leq& \sum_{t=1}^n \left[\frac{e n}{t} \left(\frac{3 e t}{k} \cdot \frac{k \log k}{(8e + \alpha) n} \right)^{k/6} \left(\frac{8 e (n-t)}{k \log k} \cdot \frac{k \log k}{(8e + \alpha) n} \right)^{k \log k/8} \right]^t \nonumber \\
&\leq& \sum_{t=1}^n \left[e \left(\frac{3e}{8e + \alpha} \right)^{k/6} \left(\frac{8 e}{8e + \alpha} \right)^{k \log k/8} (\log k)^{k/6} \right]^t < 1/3 \,,
\end{eqnarray}
where the third inequality holds since $k$ is assumed to be sufficiently large and the last inequality holds for an appropriately chosen $\alpha = \alpha(k)$; it is not hard to see that $\alpha$ can be chosen such that it tends to zero as $k$ tends to infinity.

Let ${\mathcal F} = {\mathcal F}_1 \cup {\mathcal F}_2 \cup {\mathcal F}_3$. Combining~\eqref{eq::intersectingTriangles}, \eqref{eq::manyEdges} and~\eqref{eq::largeMinDeg}, it follows from Theorem~\ref{th::ClientBES} that Client has a strategy to build a graph $G_C$ which satisfies the following three properties:
\begin{description}
\item [(a)] If $C_1$ and $C_2$ are cycles of length at most $4$ in $G_C$, then $V(C_1) \cap V(C_2) = \emptyset$.
\item [(b)] $e_{G_C}(S) \leq |S| k \log k/16$ for every $S \subseteq V(K_n)$.
\item [(c)] For every $S \subseteq V(K_n)$, if $e_{G_C}(S) \geq |S| k/6$, then $e_{G_C}(S, V(K_n) \setminus S) < |S| k \log k/8$.
\end{description}

It remains to prove that a graph which satisfies Properties (a), (b) and (c), has chromatic number at most $k$. Let $X = \{u \in V(K_n) : d_{G_C}(u) \leq (1 - \varepsilon) k \log k/3\}$ and let $Y = V(K_n) \setminus X$. Let $X_1 \cup X_2$ be a partition of $X$ such that both $G_C[X_1]$ and $G_C[X_2]$ have girth at least $5$; such a partition exists by Property (a). Clearly $\Delta(G_C[X_i]) \leq \Delta(G_C[X]) \leq (1 - \varepsilon) k \log k/3$ holds for $i \in \{1,2\}$ by the definition of $X$. Since $k \geq k_0$, using Theorem~\ref{th::girth5} we infer that  
$$
\chi(G_C[X_i]) \leq (1 + \nu((1 - \varepsilon) k \log k/3)) \cdot \frac{(1 - \varepsilon) k \log k/3}{\log ((1 - \varepsilon) k \log k/3)} \leq (1 + \varepsilon) \cdot \frac{(1 - \varepsilon) k \log k/3}{\log ((1 - \varepsilon) k \log k/3)} \leq k/3 \,,
$$
holds for $i \in \{1,2\}$. Hence, $\chi(G_C[X]) \leq \chi(G_C[X_1]) + \chi(G_C[X_2]) \leq 2k/3$. 

Suppose for a contradiction that $\chi(G_C) \geq k+1$. Since $\chi(G_C) \leq \chi(G_C[X]) + \chi(G_C[Y]) \leq 2k/3 + \chi(G_C[Y])$, it follows that $\chi(G_C[Y]) \geq k/3 + 1$. Therefore, there exists a set $Z \subseteq Y$ such that $\delta(G_C[Z]) \geq k/3$, entailing $e_{G_C}(Z) \geq |Z| k/6$. It follows by Property (b) that $e_{G_C}(Z) \leq |Z| k \log k/16$. By the definition of $Y$, and since $\varepsilon$ is arbitrarily small, we then have $e_{G_C}(Z, V(K_n) \setminus Z) \geq |Z| k \log k/8$. However, this contradicts Property (c). We conclude that $\chi(G_C) \leq k$ as claimed.   

\medskip 

Assume then that $2 \leq k < \max \{k_0, 1000\}$. Let ${\mathcal F} = \{F : \exists S \subseteq V(K_n) \textrm{ such that } S \neq \emptyset, \; F \subseteq E_{K_n}(S) \textrm{ and } |F| = |S| k/2\}$. Then
\begin{eqnarray*} 
\Phi({\mathcal F}) &=& \sum_{A \in {\mathcal F}} (q+1)^{- |A|} \leq \sum_{t=1}^n \binom{n}{t} \binom{\binom{t}{2}}{t k/2} (q+1)^{- t k/2} \\
&\leq& \sum_{t=1}^n \left[\frac{e n}{t} \left(\frac{e t}{k} \cdot \frac{k \log k}{(8e + \alpha) n} \right)^{k/2} \right]^t \leq \sum_{t=1}^n \left[e \left(\frac{e \log k}{8e + \alpha} \right)^{k/2} \right]^t < 1 \,,
\end{eqnarray*}
where the third inequality holds by our assumption that $k \geq 2$ and the last inequality holds by our assumption that $k < \max \{k_0, 1000\}$ and by choosing $\alpha = \alpha(k)$ to be sufficiently large.

It thus follows from Theorem~\ref{th::ClientBES} that Client has a strategy to build a graph $G_C$ such that every subgraph of $G_C$ admits a vertex of degree at most $k-1$. Clearly such a graph is $k$-colorable.

\bigskip

Next, assume that $q \leq c n/(k \log k)$, where $c \leq \log 2/4 - \alpha$. Let ${\mathcal F}$ denote the family of edge-sets of all cliques of $K_n$ on $\lceil n/k \rceil$ vertices. Then
\begin{eqnarray}
\sum_{A \in {\mathcal F}} 2^{-|A|/(2q-1)} &\leq& \binom{n}{\lceil n/k \rceil} 2^{- \binom{\lceil n/k \rceil}{2}/(2q)} \leq \left[e k \cdot 2^{- \frac{(\lceil n/k \rceil - 1) k \log k}{4 c n}} \right]^{\lceil n/k \rceil} \notag\\ 
&\leq& \left[2^{\log_2 e + \frac{\log k}{\log 2} + \frac{k \log k}{4 c n} - \frac{\log k}{4 c}} \right]^{\lceil n/k \rceil} = o(1) \,,
\label{eqnarray1.6}
\end{eqnarray} 
where the last equality holds by our choice of $c$ and for sufficiently large $n$.

It thus follows from Theorem~\ref{th::WaiterBES} that Waiter has a strategy to ensure that $\alpha(G_C) < \lceil n/k \rceil$ will hold at the end of the game. Since any graph $G$ has an independent set of size $\lceil v(G)/\chi(G) \rceil$, we conclude that $\chi(G_C) > k$ as claimed.   
{\hfill $\Box$ \medskip\\}

\bigskip

\noindent \textbf{Proof of Theorem~\ref{th::kColorabilityCW}}
Let $q \geq (4 + \alpha) n/(k \log k)$ be an integer and let $\nu$ be the function appearing in the statement of Theorem~\ref{th::girth5}. Fix an arbitrarily small constant $\varepsilon > 0$ and let $k_0$ be the smallest integer such that $\log \log k_0 \geq \log 2 - \log (1 - \varepsilon)$ and $\nu((1 - \varepsilon) k \log k/2) \leq \varepsilon$ holds for every $k \geq k_0$. Assume first that $k \geq k_0$. We present a strategy for Waiter; it is divided into the following two stages:

\bigskip

\noindent \textbf{Stage I:} Waiter forces Client to build a graph $H_1$ of maximum degree at most $(1 - \varepsilon) k \log k/2$ and girth at least $5$ such that $d_{G_F}(u) \leq (k \log k)^3$ holds for every $u \in V(K_n)$ at the end of this stage.  

\medskip

\noindent \textbf{Stage II:} Waiter forces Client to build a linear forest $H_2 := G_C \setminus H_1$.

\bigskip

We will prove that Waiter can indeed follow the proposed strategy. First, we introduce some notation and terminology. An edge $e \in E(G_F)$ is called \emph{dangerous} if adding it to $G_C$ creates a cycle of length $3$ or $4$. Note that once an edge becomes dangerous, it remains dangerous for as long as it is free. At any point during the game, we will denote the set of dangerous edges by $D$. 

We can now describe Stage I of Waiter's strategy in more detail. In the first round, Waiter offers Client $q+1$ arbitrary edges. For every integer $i \geq 1$, let $x_i y_i$ denote the edge claimed by Client in the $i$th round, let $X_i = \{e \in E(G_F) : x_i \in e\} \setminus D$ and let $Y_i = \{e \in E(G_F) : y_i \in e\} \setminus D$. For every integer $i \geq 1$, in the $(i+1)$st round Waiter offers Client $\min \{|X_i|, \lceil (q+1)/2 \rceil\}$ arbitrary edges of $X_i$ and $\min \{|Y_i|, \lfloor (q+1)/2 \rfloor\}$ arbitrary edges of $Y_i$. If $X_i = Y_i = \emptyset$, then Waiter chooses an arbitrary edge $uv \in G_C$ for which $\{e \in E(G_F) \setminus D : \{u,v\} \cap e \neq \emptyset\} \neq \emptyset$ and plays as if Client claimed $uv$ in the $i$th round (recall that Waiter is allowed to offer fewer than $q+1$ edges per round, but he must offer at least one). Finally, if $\{e \in E(G_F) \setminus D : \{u,v\} \cap e \neq \emptyset\} = \emptyset$ for every $uv \in G_C$, then Waiter offers Client $\min \{q+1, |E(G_F) \setminus D|\}$ arbitrary edges of $E(G_F) \setminus D$; if the latter minimum is zero, then Stage I is over and Waiter proceeds to Stage II.     

Since Stage I of the proposed strategy never instructs Waiter to offer Client any dangerous edges, it is evident that Client's graph will have girth at least $5$ at the end of Stage I. Let $u$ be an arbitrary vertex of $K_n$. Every time Client claims an edge $uv$ for some $v \in V(K_n)$, Waiter responds by offering at least $\lfloor (q+1)/2 \rfloor$ edges which are incident with $u$. Since Client can claim at most one of these edges, Waiter claims at least $\lfloor (q+1)/2 \rfloor - 1$ of them. Note that the last time Waiter offers an edge which is incident with $u$ is a possible exception. Indeed, in the worst case, he might offer only one such edge which might then be claimed by Client. We conclude that, except possibly once, for every edge of $\{uw : w \in V(K_n)\}$ which Client claims in Stage I, Waiter claims at least $\lfloor (q+1)/2 \rfloor - 1$. Therefore, $d_{H_1}(u) \leq \left \lceil \frac{n-1}{\lfloor (q+1)/2 \rfloor} \right \rceil + 1 \leq (1 - \varepsilon) k \log k/2$, where the last inequality holds if $\alpha$ is chosen to be sufficiently large compared to $\varepsilon$. Since $u$ was arbitrary, we conclude that $\Delta(H_1) \leq (1 - \varepsilon) k \log k/2$ holds at the end of Stage I. 

At the end of Stage I, fix some vertex $u \in V(K_n)$ and let $v \in V(K_n)$ be such that $uv \in D$. It follows that there exists a vertex $z \in V(K_n)$ such that $uz, zv \in E(H_1)$ or vertices $x, y \in V(K_n)$ such that $ux, xy, yv \in E(H_1)$. That is, there is a path of length two or three between $u$ and $v$ in $H_1$. Since the number of paths of length $\ell$ in $H_1$, starting at $u$, is at most $\Delta(H_1)^{\ell}$, we conclude that $|\{e \in D : u \in e\}| \leq (k \log k/2)^2 + (k \log k/2)^3 \leq (k \log k)^3$ holds for every $u \in V(K_n)$ as claimed. 

Next, we prove that Waiter can play according to Stage II of the proposed strategy. For every integer $i \geq 1$, let $x_i y_i$ denote the edge Client claims in the $i$th round of Stage II. In the first round of this stage, Waiter identifies an inclusion maximal set $A_1 \subseteq V(K_n)$ such that the number of free edges with at least one endpoint in $A_1$ is at most $q+1$ and offers Client all of these edges. Note that this edge set is non-empty since $q \gg (k \log k)^3 \geq \Delta(G_F)$. Assume that $t$ rounds were already played in Stage II for some $t \geq 1$. In the $(t+1)$st round of this stage, Waiter identifies an inclusion maximal set $A_{t+1} \subseteq V(K_n)$ such that $\{x_t, y_t\} \subseteq A_{t+1}$ and the number of free edges with at least one endpoint in $A_{t+1}$ is at most $q+1$ and offers Client all of these edges; again, this set is non-empty since $q \gg (k \log k)^3 \geq \Delta(G_F)$. Note that this strategy ensures that, for every $i \geq 1$, $d_{G_F}(x_i) = 0$ or $d_{G_F}(y_i) = 0$ holds immediately after the $i$th round of Stage II. Since, moreover, if there are still free edges incident with $\{x_i, y_i\}$, Waiter offers all of them  in round $i+1$, it follows that the graph $H_2 \subseteq G_C$ Client builds in Stage II is indeed a linear forest.      

It remains to prove that, by following the proposed strategy, Waiter forces Client's graph to be $k$-colorable. It follows from Theorem~\ref{th::girth5} that $\chi(H_1) \leq (1 + \nu((1 - \varepsilon) k \log k/2)) \cdot \frac{(1 - \varepsilon) k \log k/2}{\log ((1 - \varepsilon) k \log k/2)} \leq (1 + \varepsilon) \cdot \frac{(1 - \varepsilon) k \log k/2}{\log ((1 - \varepsilon) k \log k/2)} \leq k/2$, where the second and third inequalities follow by our choice of $k_0$. Moreover, it is evident that $\chi(H_2) \leq 2$. We conclude that $\chi(G_C) = \chi(H_1 \cup H_2) \leq \chi(H_1) \chi(H_2) \leq k$. 

\medskip

Assume then that $2 \leq k < k_0$. In a similar way to Stage I above, for sufficiently large $\alpha = \alpha(k)$, Waiter can limit the maximum degree in Client's graph at the end of the game to $k-1$. It readily follows that $\chi(G_C) \leq k$.  

\bigskip

Next, assume that $q \leq c n/(k \log k)$, where $c \leq \log 2/2 - \alpha$. Let ${\mathcal F}$ denote the family of edge-sets of all cliques of $K_n$ on $\lceil n/k \rceil$ vertices. Then 
$$
\sum_{A \in \mathcal{F}} \left(\frac{q}{q+1}\right)^{|A|} \leq \sum_{A \in {\mathcal F}} 2^{-|A|/q} = o(1),
$$
where the last equality holds by a calculation similar to~\eqref{eqnarray1.6}.

Thus, by Theorem \ref{th::ClientBESpreventing}, Client has a strategy to claim an edge in every clique of size $\lceil n/k \rceil$ in $K_n$. In particular, this means that $\alpha(G_C) = \omega(G_W) < \lceil n/k \rceil$ at the end of the game. Since any graph $G$ has an independent set of size $\lceil v(G)/\chi(G) \rceil$, we conclude that $\chi(G_C) > k$ as claimed.
{\hfill $\Box$ \medskip\\}

\section{Concluding remarks and open problems}
\label{sec::openprob}

\noindent \textbf{Sharper bounds for colorability games}. We proved that there are absolute constants $c_1 \geq c_2 > 0$ such that, for every $k \geq 2$ and sufficiently large $n$, the threshold bias of the Waiter-Client non-$k$-colorability game $(E(K_n), \mathcal{NC}_k)$ is between $(c_1 n)/(k \log k)$ and $(c_2 n)/(k \log k)$ (see Theorem~\ref{th::kColorabilityWC}). The analogous result for Client-Waiter games was proved as well (see Theorem~\ref{th::kColorabilityCW}). This matches quite well the probabilistic intuition~\cite{AN} and the corresponding known results for Maker-Breaker games~\cite{HKSSpcm}. For sufficiently large $k$, the constants $c_1$ and $c_2$ are not too far apart but, in contrast to our very precise results for the non-planarity and $K_t$-minor games, we have not determined the asymptotic value of the threshold bias of the non-$k$-colorability game for any $k \geq 2$. When trying to prove such a result, a good place to start might be the \emph{non-bipartite game} $(E(K_n), \mathcal{NC}_2)$. For the Maker-Breaker version of this game, the threshold bias is known to be between $\lceil n/2 \rceil$ (see~\cite{BPcycles}) and $(1 - 1/\sqrt{2} - o(1)) n$ (see~\cite{BP}). 

For the Waiter-Client game $(E(K_n), \mathcal{NC}_2)$, we believe that the threshold bias is $(1 + o(1)) n$. Indeed, for $q \leq (1 - o(1)) n$, it is not hard to devise an explicit strategy for Waiter to force an odd cycle in Client's graph (for example, as in the proof of Claim~\ref{cl::longPath}, Waiter can force Client to build a path $P$ of length $(2 + o(1)) \sqrt{q}$ such that there are at least $q+1$ free edges with both endpoints in $P$, each closing an odd cycle in Client's graph; in a single round he then offers any $q+1$ of these edges). On the other hand, it was conjectured in~\cite{BHKL} that Client has a strategy to avoid any cycle if $q \geq (1 + o(1)) n$. If true, this will prove that the threshold bias of the Waiter-Client game $(E(K_n), \mathcal{NC}_2)$ is indeed $(1 + o(1)) n$. A direct application of Theorem~\ref{th::ClientBES} shows that Client can avoid odd cycles if $q \geq (1 + \alpha) n$, where $\alpha = (1 - \tanh(2))/\tanh(2) \approx  0.0374$.               

For the Client-Waiter game $(E(K_n), \mathcal{NC}_2)$, we believe that the threshold bias is $(1/2 + o(1)) n$. Indeed, it immediately follows from Theorem~\ref{th::CWminor}, that Waiter can force Client to build a bipartite graph if $q \geq \lceil n/2 \rceil - 1$. However, the best we can currently prove in the opposite direction is that Client can build an odd cycle if $q \leq (1/(4 \log 2) - o(1)) n$. This is done by applying Theorem~\ref{th::ClientBESpreventing} to the family $\{E(Q_1) \cup E(Q_2) : Q_1 \textrm{ and } Q_2 \textrm{ are cliques, } V(Q_1) \cap V(Q_2) = \emptyset \textrm{ and } V(Q_1) \cup V(Q_2) = V(K_n)\}$. 

\noindent \textbf{Minor games in the critical window}. For a graph $G$, let $ccl(G)$ denote the order of the largest complete minor in $G$. Our results for the Waiter-Client $K_t$-minor game exhibit a very strong probabilistic intuition. In both the \emph{sub-critical regime} (that is, when Waiter's bias $q$ is at least $(1 + \varepsilon) n$ and, correspondingly, the edge probability in the random graph is at most $(1 - \varepsilon) /n$) and the \emph{super-critical regime} (that is, when Waiter's bias $q$ is at most $(1 - \varepsilon) n$ and, correspondingly, the edge probability in the random graph is at least $(1 + \varepsilon) /n$), we proved that a.a.s. $ccl(G_C) = \Theta(ccl(G(n, 1/(q+1))))$. The graph invariant $ccl(G(n,p))$ has also been investigated in the \emph{critical window}, that is, when $p$ is very close to $1$. Building on results of \L uczak~\cite{L1, L2}, it was proved by Fountoulakis, K\"uhn and Osthus~\cite{FKOregular} that if $p = 1/n + \lambda n^{- 4/3}$, where $1 \ll \lambda \ll n^{1/3}$, then a.a.s. $ccl(G(n,p)) = \Theta(\lambda^{3/2})$. It would be interesting to know whether analogous results hold in the game setting as well. Note that Theorem~\ref{th::WCminor} does provide a non-trivial lower bound on $ccl(G_C)$ when $n^{- 1/4} \ll \varepsilon \ll 1$. Indeed, if $n^{- 1/4} \ll \varepsilon \ll 1$ is chosen such that $\lfloor \binom{n}{2}/((1 - \varepsilon) n + 1) \rfloor = (1/n + \lambda n^{- 4/3}) \binom{n}{2}$, then $1 \ll \lambda \ll n^{1/3}$ and $ccl(G_C) \geq c \lambda^2 n^{- 1/6}$ for some constant $c > 0$. However $\lambda^2 n^{- 1/6} \ll \lambda^{3/2}$ whenever $\lambda \ll n^{1/3}$ and, moreover, $\lambda^2 n^{- 1/6} \ll 1$ if $\lambda \ll n^{1/12}$.              

On the other hand, even if $q = (1/2 - \varepsilon) n$, where $\varepsilon > 0$ is arbitrarily small but fixed, the largest complete minor we proved Client can guarantee in his graph in the Client-Waiter $K_t$-minor game is of order $n^{\gamma}$, where $\gamma = \gamma(\varepsilon) > 0$ is a small constant. It would be interesting to know whether Client can ensure a larger minor, ideally of order $\Theta(\sqrt{n})$, as in the Waiter-Client version of the game.

\section*{Acknowledgment}

We would like to thank Oren Dean for providing us with Theorem~\ref{th::Oren} and its proof, and the anonymous referees for helpful comments.

\bibliographystyle{amsplain}

\begin{thebibliography}{99}

\bibitem{AN}
D. Achlioptas and A. Naor, The two possible values of the chromatic number of a random graph, 
\emph{Annals of Mathematics} 162 (2005), 1335--1351.

\bibitem{becksec}
J. Beck,
Positional games and the second moment method,
\emph{Combinatorica} 22 (2002), 169--216.

\bibitem{TTT}
J. Beck, 
\textbf{Combinatorial games: Tic-tac-toe theory}, 
Encyclopedia of Mathematics and its Applications 114, 
Cambridge University Press, Cambridge, 2008. xiv+732~pp.

\bibitem{Bednarska}
M. Bednarska-Bzd\c ega, 
On weight function methods in Chooser-Picker games, 
\emph{Theoretical Computer Science} 475 
(2013), 21--33.

\bibitem{BHL}
M. Bednarska-Bzd\c ega, D. Hefetz and T. \L uczak, 
Picker-Chooser fixed graph games, submitted, 
arXiv:1402.7308.

\bibitem{BHKL}
M. Bednarska-Bzd\c ega, D. Hefetz, M. Krivelevich and T. \L uczak, 
Manipulative waiters with probabilistic intuition, submitted,
arXiv:1407.8391.

\bibitem{BL}
M. Bednarska and T. \L uczak, 
Biased positional games for which random strategies are nearly optimal, 
\emph{Combinatorica} 20 (2000), 477--488.

\bibitem{BPcycles}
M. Bednarska and O. Pikhurko, Biased positional games on matroids,
\emph{European Journal of Combinatorics} 26 (2005), 271--285.

\bibitem{BP}
M. Bednarska and O. Pikhurko, Odd and even cycles in Maker�Breaker games,
\emph{European Journal of Combinatorics} 29 (2008), 742--745. 

\bibitem{Bol}
B. Bollob\'as, {\bf Random Graphs}, 2nd edition, Cambridge University Press, 2001.

\bibitem{BCE}
B. Bollob\'as, P. A. Catlin and P. Erd\H{o}s, Hadwiger's conjecture is true for almost every graph,
\emph{European Journal of Combinatorics} 1 (1980), 195--199.

\bibitem{CE}
V. Chv\'atal and P. Erd\H{o}s, Biased positional games, 
\emph{Annals of Discrete Mathematics} 2 (1978), 221--228.

\bibitem{CEPT}
D. Clemens, J. Ehrenm\"uller, Y. Person and T. Tran,
Keeping Avoider's graph almost acyclic, 
\emph{The Electronic Journal of Combinatorics} 22(1) (2015), P1.60.

\bibitem{ctcs}
A. Csernenszky, 
The Picker-Chooser diameter game, 
\emph{Theoretical Computer Science} 411 
(2010), 3757--3762.

\bibitem{Dean}
O. Dean, personal communication.

\bibitem{ES}
P. Erd\H{o}s and J L. Selfridge, 
On a combinatorial game, 
\emph{Journal of Combinatorial Theory Series A} 14 
(1973), 298--301.

\bibitem{FKO}
N. Fountoulakis, D. K\"uhn and D. Osthus, 
The order of the largest complete minor in a random graph, 
\emph{Random Structures and Algorithms} 33 (2008), 127--141.

\bibitem{FKOregular}
N. Fountoulakis, D. K\"uhn and D. Osthus, 
Minors in random regular graphs, 
\emph{Random Structures and Algorithms} 35 (2009), 444--463. 

\bibitem{GSz}
H. Gebauer and T. Szab\'o, 
Asymptotic random graph intuition for the biased connectivity game, 
\emph{Random Structures and Algorithms} 35 
(2009), 431--443.

\bibitem{HJ}
A. W. Hales and R. I. Jewett, 
Regularity and positional games, 
\emph{Transactions of the American Mathematical Society} 106 
(1963), 222--229.

\bibitem{HKSSpcm}
D. Hefetz, M. Krivelevich, M. Stojakovi\'c and T. Szab\'o,
Planarity, colorability and minor games,
\emph{SIAM Journal on Discrete Mathematics} 22 (2008), 194--212.

\bibitem{HKSSae}
D. Hefetz, M. Krivelevich, M. Stojakovi\'c and T. Szab\'o,
Avoider-Enforcer: The rules of the game,
\emph{Journal of Combinatorial Theory Series A} 117 
(2010), 152--163.

\bibitem{HKSS}
D. Hefetz, M. Krivelevich, M. Stojakovi\'c and T. Szab\'o, \textbf{Positional Games}, Birkh\"auser, 2014. 

\bibitem{HKSae}
D. Hefetz, M. Krivelevich and T. Szab\'o,
Avoider-Enforcer games,
\emph{Journal of Combinatorial Theory Series A} 114 
(2007), 840--853. 

\bibitem{HKT}
D. Hefetz, M. Krivelevich and W. E. Tan, Waiter-Client and Client-Waiter Hamiltonicity games on random graphs, submitted.  arXiv:1509.05356.

\bibitem{Hoeffding}
W. Hoeffding, On the distribution of the number of successes in independent trials, \emph{The Annals of Mathematical Statistics} 27(3) (1956), 713--721.  

\bibitem{JLR}
S. Janson, T. \L uczak and  A. Ruci\'nski, {\bf Random graphs}, Wiley, 2000.

\bibitem{Kim} 
J. H. Kim, On Brooks' Theorem for sparse graphs, \emph{Combinatorics, Probability and Computing} 4 (1995), 97--132.

\bibitem{kriv}
M. Krivelevich, 
The critical bias for the Hamiltonicity game is (1 + o(1))n/ ln n, 
\emph{Journal of the American Mathematical Society} 24 
(2011), 125--131. 

\bibitem{KS}
M. Krivelevich and B. Sudakov, Minors in expanding graphs,
\emph{Geometric and Functional Analysis} 19 (2009), 294--331. 

\bibitem{Lehman}
A. Lehman, A solution of the Shannon switching game, 
\emph{J. Soc. Indust. Appl. Math.} 12 (1964), 687--725.

\bibitem{L1}
T. \L uczak, Component behavior near the critical point of the random graph process,
\emph{Random Structures and Algorithms} 1 (1990), 287--310.

\bibitem{L2}
T. \L uczak, Cycles in a random graph near the critical point,
\emph{Random Structures and Algorithms} 2 (1991), 421--440.

\bibitem{West}
D. B. West, 
{\bf Introduction to Graph Theory}, 2nd edition, 
Prentice Hall, 2001.

\end{thebibliography}

\end{document}